\newtheorem{theorem}{Theorem}[section]
\newtheorem{cor}{Corollary}[section]
\newtheorem{lemma}{Lemma}[section]
\newtheorem{proposition}{Proposition}[section]
\theoremstyle{definition}
\newtheorem{definition}{Definition}[section]
\newtheorem{example}{Example}[section]
\newtheorem{question}{Question}[section]
\numberwithin{equation}{section}
\date{ }
\begin{document}

\title{Self-Similar Polygonal Tiling}
\author{Michael Barnsley \\ Australian National University \\ Canberra, Australia \\ \\
Andrew Vince \\ University of Florida \\ Gainesville, FL, USA}

\maketitle

\begin{abstract}
The purpose of this paper is to give the flavor of the subject of self-similar
tilings in a relatively elementary setting, and to provide a novel method for
the construction of such polygonal tilings.
\end{abstract}

\section{Introduction.}  \label{sec:intro}  Our goal is to lure the reader into the theory 
underlying the figures scattered throughout this paper.  The individual polygonal tiles in each of these tilings are pairwise similar, and there are only finitely many up to congruence.  
Each tiling is {\it self-similar}.  None of the tilings are periodic, yet 
each is {\it quasiperiodic}.  These concepts, self-similarity and quasiperiodicity, are defined 
in Section~\ref{sec:polytilings} and are discussed throughout the paper.  
Each tiling  is constructed by the same method from a 
single self-similar polygon.

\begin{figure}[tbh]
\vskip -4mm
\includegraphics[width=13cm, keepaspectratio]{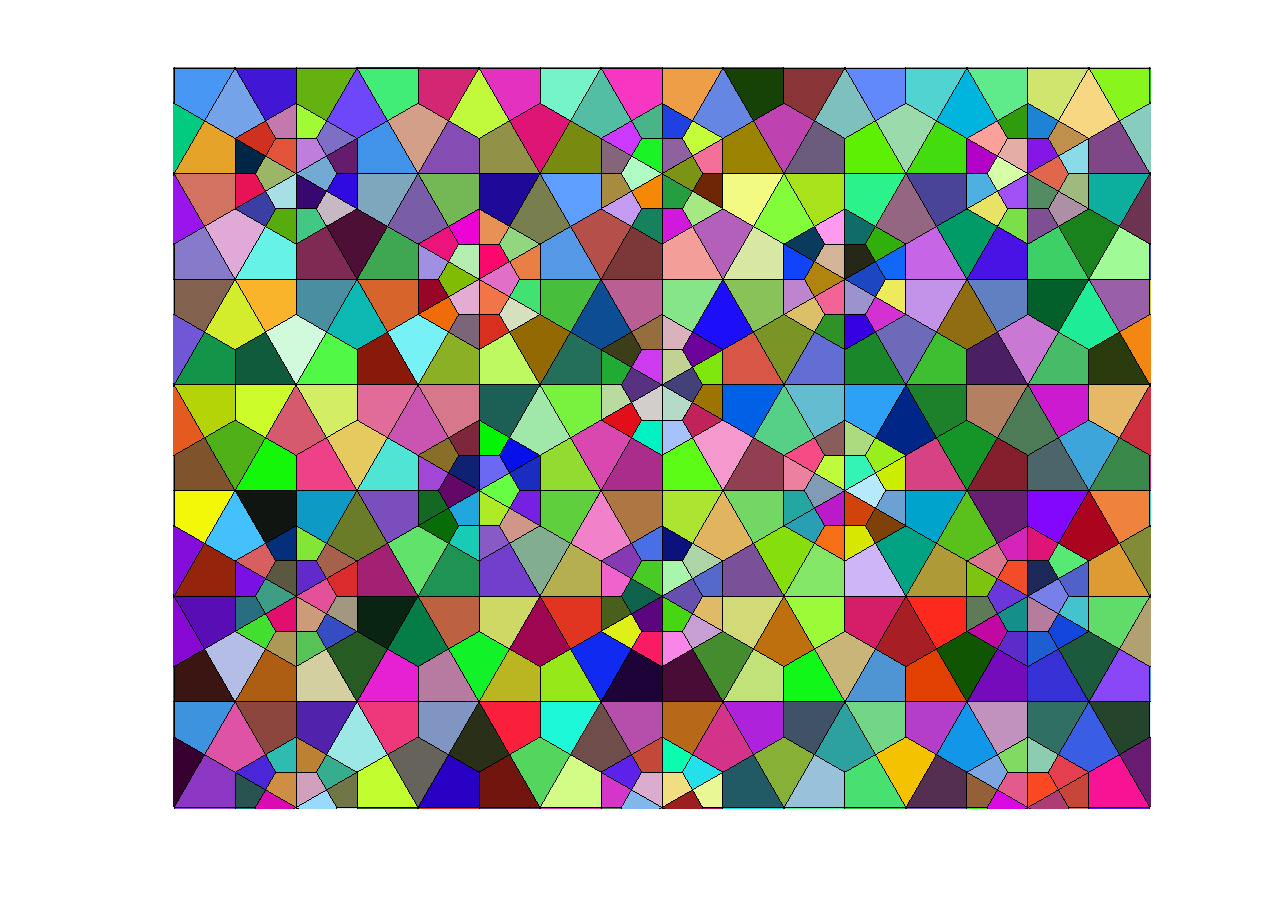} 
\vskip -6mm
\caption{A self-similar polygonal tiling of order $2$.}
\label{fig:5}
\end{figure}

For the tiling $T$ of the plane, a part of which is shown in
Figure~\ref{fig:5}, there are  two similar tile shapes, the ratio of
the sides of the larger quadrilateral to the smaller quadrilateral being
$\sqrt{3}:1$. In the tiling of the entire plane, the part shown in the figure
appears \textquotedblleft everywhere," the phenomenon known as quasiperiodicity
or repetitivity.  The tiling is self-similar in that there exists
 a similarity transformation $\phi$ of the plane such that, for each tile
$t\in T$, the \textquotedblleft blown up" tile $\phi(t)=\{\phi(x):x\in t\}$ is
the disjoint union of the original tiles in $T$. 

\begin{figure}[tbh]
\centering
\includegraphics[width=11cm, keepaspectratio]{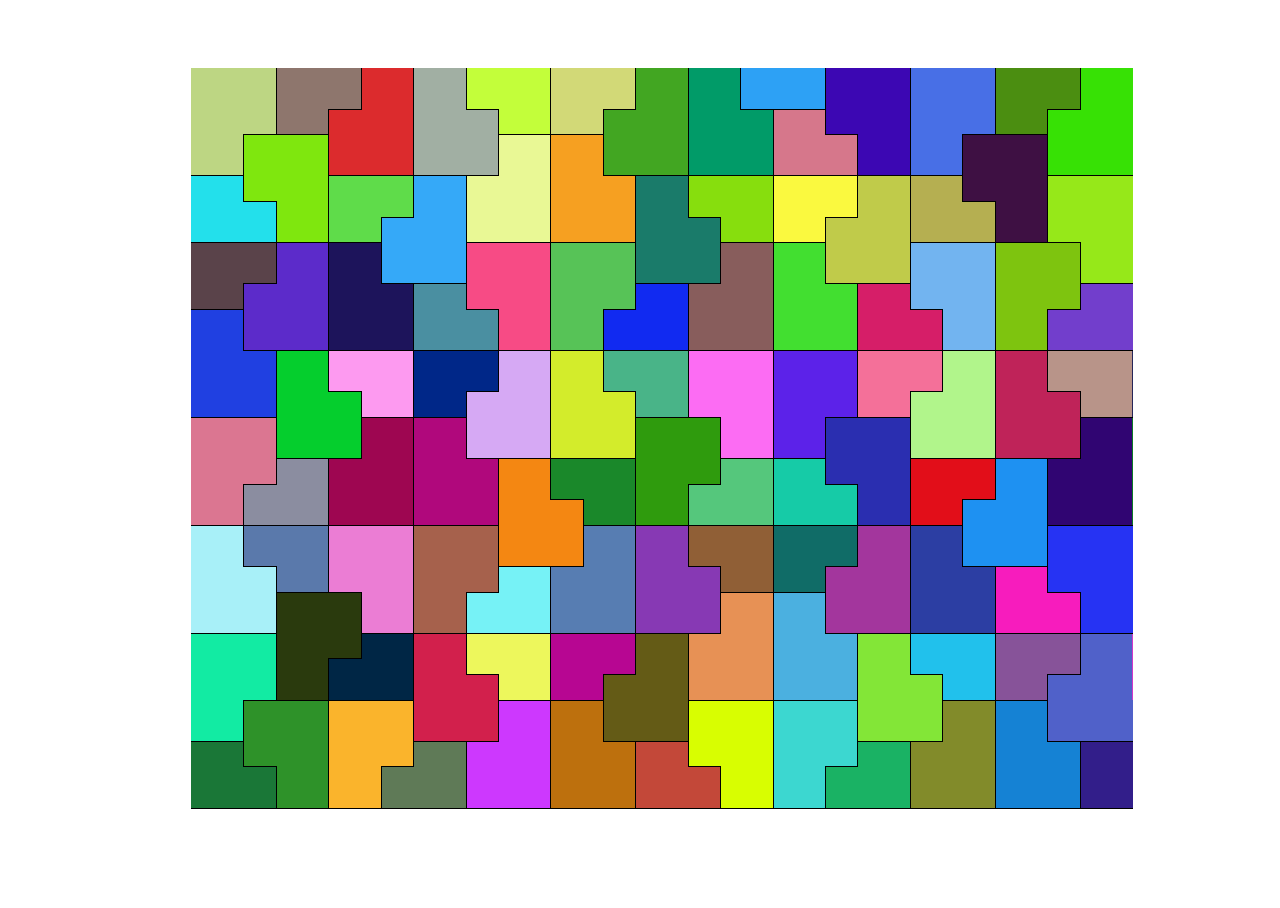}
\vskip -4mm
\caption{A golden bee tiling.}
\label{fig:2}
\end{figure}

The tiling in Figure~\ref{fig:2} is also self-similar and quasiperiodic. 
There are again two tile shapes, shown in dark and light grey in 
Figure~\ref{figure1}.  The large (dark grey) tile (L), the small (light grey) tile (S), 
and their union, call it $G$, are pairwise similar polygons.  
The hexagon $G$, called the golden bee in \cite{S}, appears in 
\cite{G} where it is attributed to Robert Ammann. For an entertaining piece
on ``The Mysterious Mr. Ammann," see the article by M.
Senechal \cite{Sen}.  If $\tau=(1+\sqrt{5})/2$ is the golden ratio
and $a=\sqrt{\tau}$, then the sides of L are $a$ times as long as the sides
of S, and the sides of $G$ are $a$ times as long as the sides of L.
Except for non-isosceles right triangles, the golden bee is the only polygon that can be partitioned into a 
non-congruent pair of scaled copies of itself \cite{Sch}.

\begin{figure}[bht]
\centering
\includegraphics[width=3.5cm, keepaspectratio]{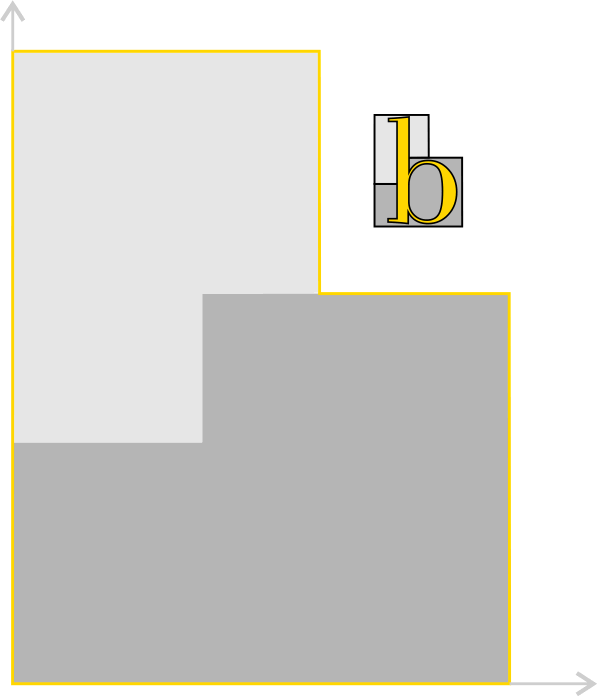}
\vskip 4mm
\caption{The golden bee is an example of a self-similar
polygon.  The ratio of the length of the left side to the
rightmost side is the golden ratio, and also of the bottom side to the topmost side. The inset
picture relates this hexagon to the letter bee.}
\label{figure1}
\end{figure}

\begin{figure}[htb]
\centering
\includegraphics[width=10cm, keepaspectratio]{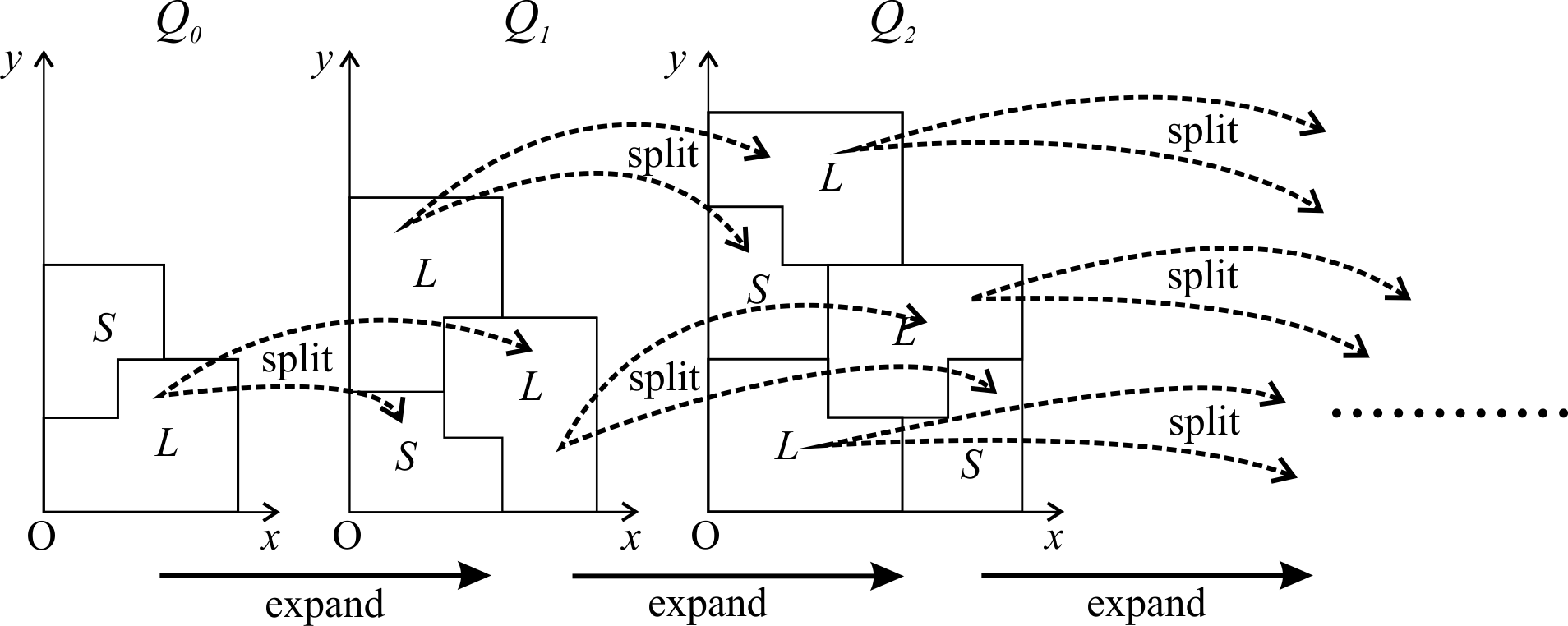}
\vskip 3mm
\caption{Inductive construction of the canonical sequence of
golden tilings $\{Q_{k}\}$. Each tiling is obtained from its predecessor by
scaling by the square root of the golden ratio, then subdividing each inflated large tile
into a large (L) and small (S) tile, as in Figure
\ref{figure1}.}
\label{figure2}
\end{figure}

Before veering into the general case, we give an
informal description of how infinitely many golden bee tilings, like the one in Figure~\ref{fig:2}, can be
obtained from the golden bee polygon in Figure~\ref{figure1}.
Figure \ref{figure2} illustrates how a canonical sequence of tilings $\{Q_{k}\}_{k=0}^{\infty}$ can be
constructed recursively by ``expanding and splitting." Each
tiling in the sequence uses only copies of the large (L) dark grey and small (S) light grey tiles of Figure \ref{figure1}.
Note that $Q_2$ is the disjoint union of one copy of $Q_0$ and one copy of $Q_1$.  Similarly, $Q_{k+2}$ is the disjoint union of a copy of $Q_{k}$ and a copy of $Q_{k+1}$ for all $k\geq 0$.  
This provides a way of embedding a copy of $Q_k$ into a copy of $Q_{k+1}$ (call this
a type $1$ embedding), and another way of embedding a copy of $Q_k$ into a copy of $Q_{k+2}$ (call this a type $2$ embedding).  The first type applied successively twice yields a
different tiling from the one obtained by applying the second type once.

\begin{figure}[htb]
\centering
\includegraphics[width=12cm, keepaspectratio]{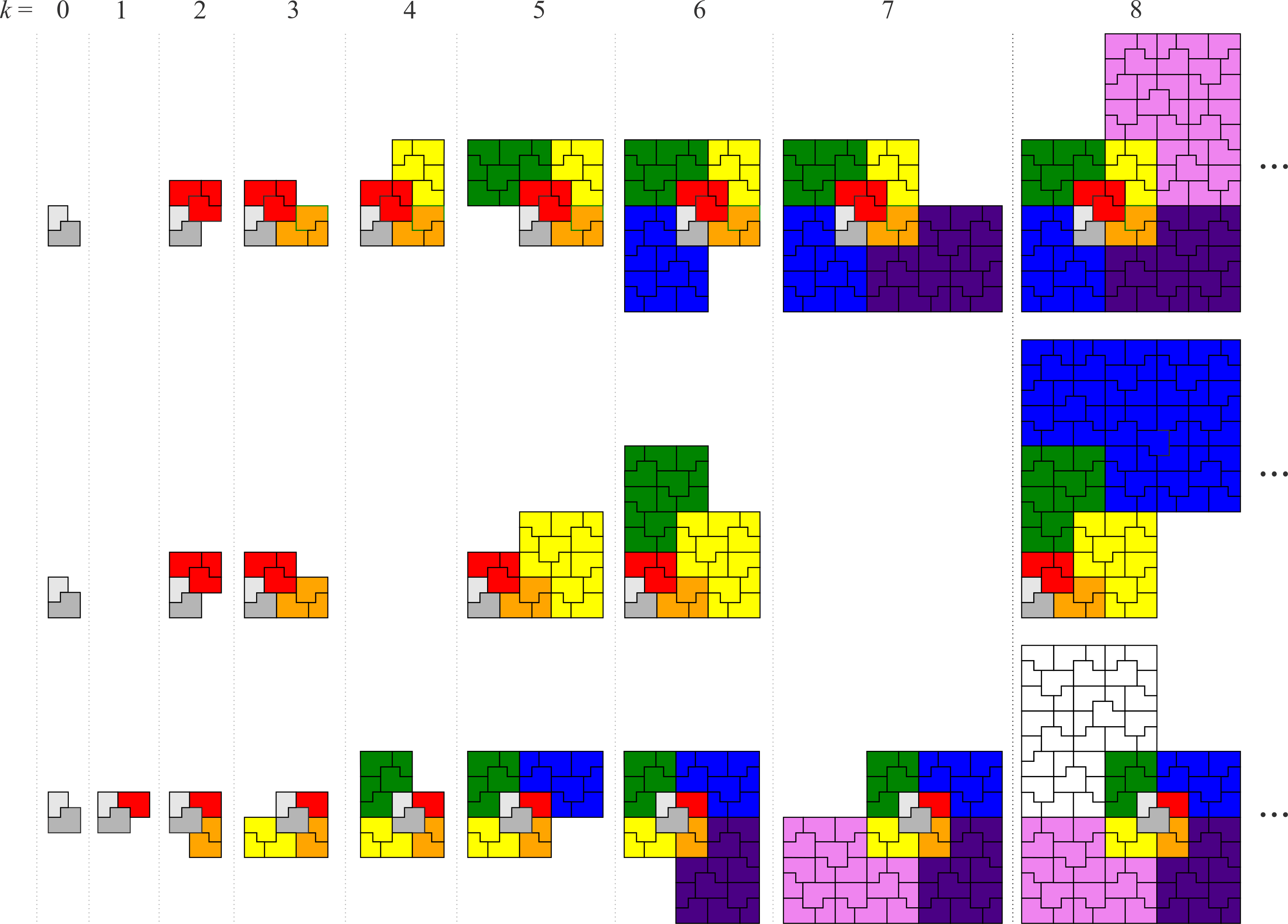}
\vskip 4mm
\caption{The construction of $G(11111111)$ (bottom),
$G(21212)$ (middle) and $G(21111 1 1)$ (top).  The tilings in the $k$th column are congruent to 
$Q_{k}$.  In general, the tiling $G(\theta_{1}\theta
_{2} \dots \theta_{K})$ is congruent to $Q_{\theta_{1}+\theta_{2}+\dots+\theta_{K}}$.}
\label{figure3}
\end{figure}

Fix a copy of $Q_0$ (the first column of Figure~\ref{figure3}).  It follows from the paragraph above that, for each infinite sequence of the symbols $1$ and $2$, for example $21212 \cdots$, one obtains a nested sequence of tilings, each tiling in the sequence congruent to $Q_k$ for some $k$.  The example
$G(2)\subset G(21)\subset G(212)\subset G(2121)\subset G(21212) \subset \cdots$ is illustrated in the middle row in Figure \ref{figure3}.  
Similarly, the top row illustrates the construction of $G(2111111)$ and the
bottom row illustrates the construction of $G(11111111)$.  
The union of the tiles, for example
\[
G(21212 \dots )=G(2)\cup G(21)\cup G(2121)\cup  G(21212) \cup \cdots ,
\]
is a tiling of a region in the plane of infinite area.  In this way, for each infinite sequence
$\theta$ with terms in $\{1,2\}$,
there is a corresponding tiling $G(\theta)$, which is referred to as a {\it golden bee tiling}. 
This ad hoc procedure for obtaining golden bee tilings from the single golden bee $G$ 
has a simple description in the general case.  This is the construction 
in Definition~\ref{def:scaled} of Section~\ref{sec:GP}.

Properties of the golden bee tilings include the following, extensions to
the general case appearing in Section~\ref{sec:ss}.

\begin{itemize}
\item $G(\theta)$ is a tiling of the plane for almost all $\theta$. What is
meant by ``almost all" and for which $\theta$ the statement is true is
discussed in Section~\ref{sec:ss}.

\item $G(\theta)$ is self-similar and quasiperiodic for
infinitely many $\theta$. Results on precisely which $\theta$ appear in
Section~\ref{sec:ss}.

\item $G(\theta)$ is nonperiodic for all $\theta$.

\item There are uncountably many distinct golden bee tilings up to congruence.

\item The ratio of large to small tiles in any a ball of radius $R$ centered
at some fixed point tends to the golden ratio $(1+\sqrt{5})/2$ as
$R\rightarrow\infty$. The general method for calculating such ratios is
demonstrated after Example~\ref{ex:trap} in Section~\ref{sec:examples}.
\end{itemize}

\begin{figure}[hbt]
\vskip -5mm 
\centering
\includegraphics[width=13cm, keepaspectratio]{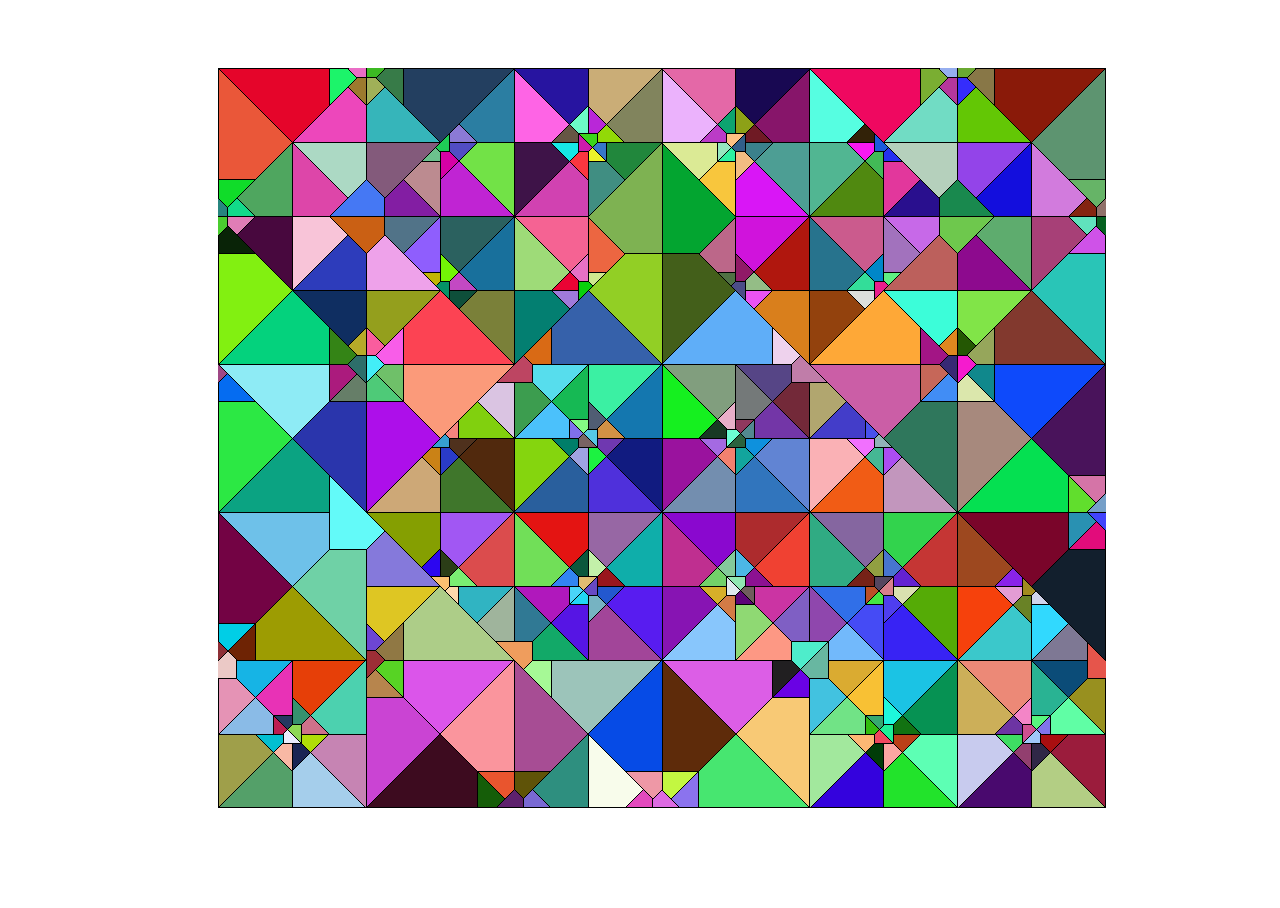} 
\vskip -5mm
\caption{A self-similar polygonal tiling of order $6$;
see Example~\ref{ex:spor}.}
\label{fig:3}
\end{figure}

\section{What is in this paper.}

\label{sec:contents} The organization of this paper is as follows.  As a  motivating example, we informally explored
the golden bee tilings in Section~\ref{sec:intro}.  Section \ref{sec:polytilings} contains background and definitions, in particular an explanation of exactly what is meant by a self-similar polygonal tiling.
Our general construction of self-similar polygonal tilings, the subject of Section~\ref{sec:GP}, 
 is based on what we call a generating pair (Definition~\ref{def:GP}).  The
crux of the construction, generalizing that of the golden tilings of Section \ref{sec:intro}, is contained in Definition~\ref{def:scaled}.  The main
result of the paper is Theorem~\ref{thm:main0}, stating that our construction
indeed yields self-similar polygonal tilings of the plane.
Examples of self-similar polygonal tilings appear in Section~\ref{sec:examples}; the question of which polygons admit self-similar
tilings leads to an intriguing polygonal taxonomy.  Section~\ref{sec:ss}
elaborates on Theorem~\ref{thm:main0}, delving into some of its subtleties.  Proofs of results in Section~\ref{sec:ss}
appear in Section~\ref{sec:proofs}. 
There remains much to be learned about self-similar polygonal tiling; basic
problems, posed in Section~\ref{sec:remarks}, remain open.
\vskip -4mm

\begin{figure}[htb]
\centering
\includegraphics[width=10cm, keepaspectratio]{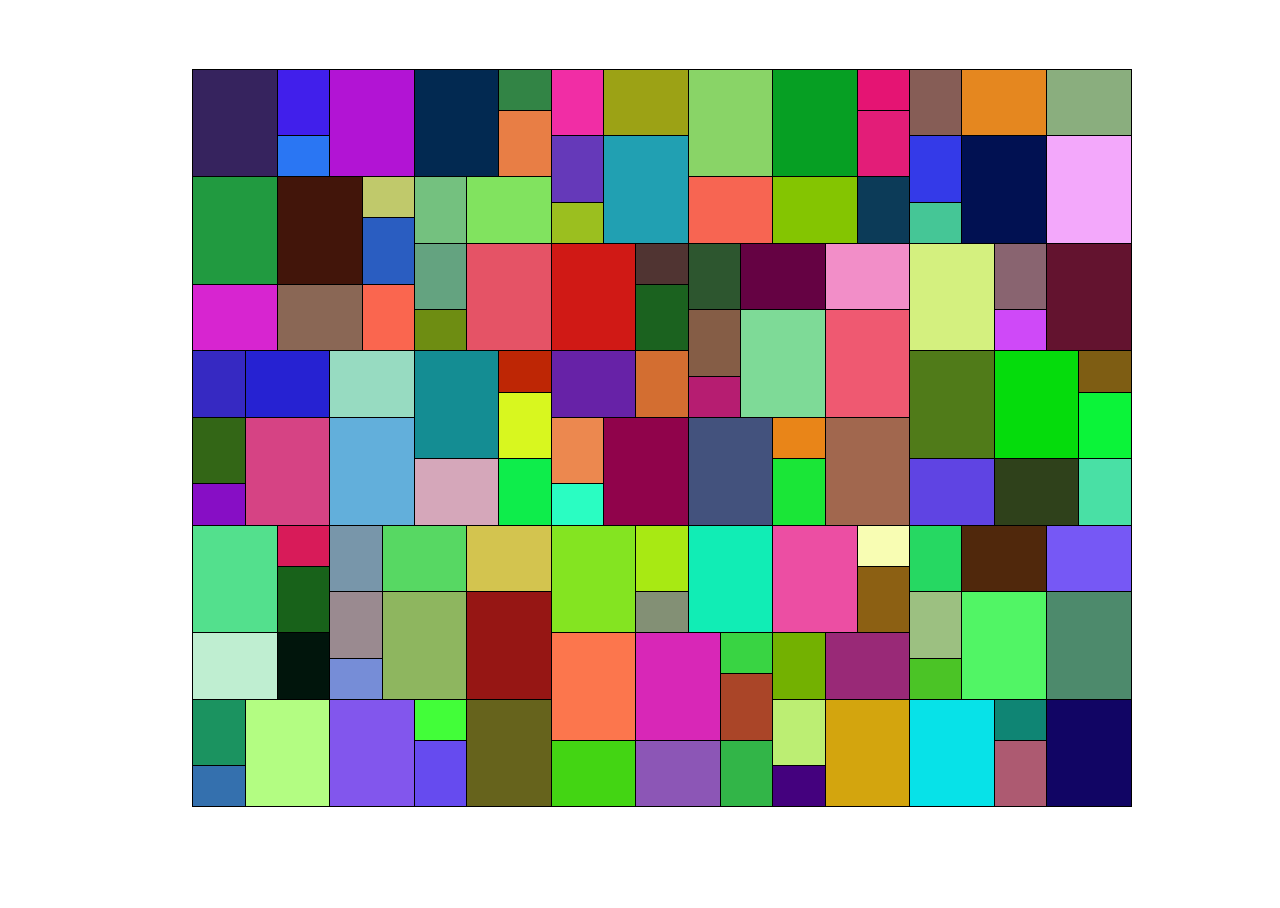} \vskip -4mm
\caption{A
tiling based on sporadic generating pair A in Figure~\ref{fig:spor}; see
Example~\ref{ex:spor}.}
\label{fig:A}
\end{figure}

\section{Self-Similar Polygonal Tilings.} \label{sec:polytilings} 
There is a cornucopia of tilings of the plane
possessing some sort of regularity. The history of such tilings goes back to
antiquity, and the mathematical literature is replete with papers on the
subject. On the decorative side, there are, for example, the $14^{th}$ century
mosaic tilings on the Alhambra palace in Granada, Spain, and the tilings in M.
C. Escher's $20^{th}$ century prints. On the mathematical side, there are, for
example, the tilings by regular polygons dating back at least to J. Kepler,
tilings with large symmetry group as studied by Gr\"{u}nbaum and Shephard
\cite{G} and many others, and the Penrose tilings \cite{Pen} and their relatives.
Our goal in this paper is to give the flavor of the subject of self-similar tilings in a
relatively elementary setting, and to provide a novel method for the construction of
such polygonal tilings.

After a few basic definitions, we take a closer look at the concepts of
self-similarity and quasiperiodicity. A \textit{tiling} of the plane is a set
of pairwise disjoint compact sets whose union is the plane. \textit{Disjoint}
means that a pair of tiles can intersect only at a subset of their
boundaries. A set $\mathcal{P}$ of tiles is called the \textit{prototile set}
of a tiling $T$ if, up to congruence, $\mathcal{P}$ contains exactly one copy
of each tile in $T$. The prototile set in Figure~\ref{fig:5} consists of the
two quadrilaterals. The \textit{order} of the tiling is the number of tiles in
its prototile set.  Whereas the tilings in Figures~\ref{fig:5} and \ref{fig:2} have order $2$,
the tiling in Figure~\ref{fig:3} has order $6$.  
 In this paper, all tilings $T$ have finite order, and all
the tiles in $T$ have the same shape up to similarity. Moreover, all tilings
in this paper are polygonal, the tiles being closed polygons with non-crossing
(except at vertices) sides and positive area. Therefore, the prototile set of
our tilings will consist of finitely many pairwise similar polygons.

A tiling of the plane is \textit{periodic} if there is a translation of the
plane that leaves the tiling invariant (as a whole fixed); otherwise the
tiling is \textit{nonperiodic}. Quasiperiodicity, a property less stringent
than periodicity, has gained considerable attention since the 1984 Nobel Prize
winning discovery of quasicrystals by Shechtman, Blech, Gratias, and Cahn
\cite{SBGC}. Quasicrystals are materials intermediate between crystalline and
amorphous, materials whose molecular structure is nonperiodic but nevertheless
exhibits long range order as evidenced by sharp ``Bragg" peaks in their
diffraction pattern. Define a \textit{patch} of a tiling $T$ as a subset of
$T$ whose union is a topological disk. A tiling of the plane is
\textit{quasiperiodic} if, for any patch $U$, there is a number $R>0$ such
that any disk of radius $R$ contains, up to congruence, a copy of $U$. This is
what we meant by saying that every patch of the tiling appears everywhere in
the tiling. If you were placed on a quasiperiodic tiling, then your local
surroundings would give no clue as to where you were globally. The tilings in
Figures~\ref{fig:5}, \ref{fig:2}, and \ref{fig:3}, although nonperiodic, are
quasiperiodic.

A \textit{similitude} $f$ of the plane is a transformation with the property
that there is a positive real number $r$, the \textit{scaling ratio}, such
that $|f(x) - f(y)| = r\, |x-y|$, where $| \, \cdot\, |$ is the Euclidean
norm. A similitude of the plane is, according to a classification, either a
stretch rotation or a stretch reflection.
Self-similarity, in one form or another, has been intensely studied over the
past few decades --- arising in the areas of fractal geometry, Markov
partitions, symbolic dynamics, radix representation, and wavelets. The tiles
arising in these subjects usually have fractal boundaries. R. Kenyon
\cite{Ken}, motivated by work of W. P. Thurston \cite{Thurs}, proved the
 existence of a wide class of self-similar
tilings. Explicit methods exist for the construction of certain families of
self-similar tilings: digit and crystallographic tilings \cite{V1} and the
Rauzy \cite{Rauzy} and related tilings. In this paper, a tiling $T$ is called
\textit{self-similar} if there is a similitude $\phi$ with scaling ratio
$r(\phi) > 1$ such that, for every $t\in T$, the polygon $\phi(t)$ is the
disjoint union of tiles in $T$. The map $\phi$ is called a
\textit{self-similarity} of the tiling $T$.

Since all tiling figures in this paper are, of necessity, just
a part of the tiling, and because quasiperiodicity and self-similarity are
global properties, it is not possible to say, from the figure alone,
whether or not the actual tiling is quasiperiodic or self-similar.

In order to keep technicalities to a minimum, we restrict this paper to polygonal tiling.

\begin{definition}
A \textbf{self-similar polygonal tiling} is a tiling of the plane by pairwise
similar polygons that is (1) self-similar, (2) quasiperiodic, and (3) of
finite order.
\end{definition}

Immediate consequences of the above definition are the following.

\begin{itemize}
\item Self-similar polygonal tilings are hierarchical. Using the notation
$\phi^{k}$ for the $k$-fold composition, if $T$ is a self-similar tiling with
self-similarity $\phi$, then \linebreak$T, \phi(T), \phi^{2}(T), \dots$ is a
sequence of nested self-similar tilings, each at a larger scale than the
previous. \vskip 2mm

\item If $p$ is any polygon in the prototile set of a self-similar polygonal
tiling, then there exist similitudes $f_{1},f_{2},\dots,f_{N},\,N\geq2,$ each
with scaling ratio less than $1$, such that
\begin{equation}
p=\bigsqcup_{n=1}^{N}f_{n}(p)\text{,}\label{eq:IFS}
\end{equation}
where $\bigsqcup$ denotes a pairwise disjoint union. In the fractal
literature, \linebreak$F=\{f_{1},f_{2},\dots,f_{N}\}$ is an example of an
iterated function system and $p$ is an example of the attractor of the iterated
function system \cite{FE}.
\end{itemize}

\section{The construction.} \label{sec:GP}

Our construction of self-similar polygonal tilings is contained in
Definition~\ref{def:scaled}. It relies on what we call a generating pair,
whose Definition~\ref{def:GP} is clearly motivated by the
equation~\eqref{eq:IFS} that must hold for any self-similar polygon tiling.

\begin{definition}
\label{def:GP} Let $p$ be a polygon and $F=\{f_{1},f_{2},\dots,f_{N}\}, \,
N\geq2,$ a set of similitudes with respective scaling ratios $r_{1} ,r_{2}, \dots ,r_{N}$. If there there exists a real number $0<s<1$ and positive
integers $a_{1},a_{2}, \dots, a_{N}$ such that
\begin{equation} \label{eq:I}
p=\bigsqcup_{f\in F} f(p), \qquad\qquad\text{and} \qquad\qquad
r_{n}=s^{a_{n}},
\end{equation}
for $n=1,2,\dots, N$, then $(p,F)$ will be called a \textbf{generating pair}.
The second equation is essential for insuring that the constructed tilings
have finite order; see Question 3 and Proposition~\ref{prop:tiling} of
Section~\ref{sec:ss}.
\end{definition}

\begin{example}
[Generating pair for the golden bee]Let $s=1/\sqrt{\tau}$, where $\tau$ is the
golden ratio. For the golden bee, the generating pair is $(G,\{f_{1},f_{2}\})$, where
\[
f_{1}
\begin{pmatrix}
x\\
y
\end{pmatrix}
=
\begin{pmatrix}
0 & -s\\
s & 0
\end{pmatrix}
\begin{pmatrix}
x\\
y
\end{pmatrix}
+
\begin{pmatrix}
s\\
0
\end{pmatrix}
,\qquad\qquad f_{2}
\begin{pmatrix}
x\\ y
\end{pmatrix}
=
\begin{pmatrix}
s^{2} & 0\\0 & -s^{2}
\end{pmatrix}
\begin{pmatrix}
x\\y
\end{pmatrix}
+
\begin{pmatrix}
0\\1
\end{pmatrix}.
\]
The respective scaling ratios are $r_{1}=s,\,r_{2}=s^{2}$ and $(a_{1}
,a_{2})=(1,2)$.
\end{example}

 \begin{figure}[hbt]
\vskip -5mm
\includegraphics[width=13cm, keepaspectratio]{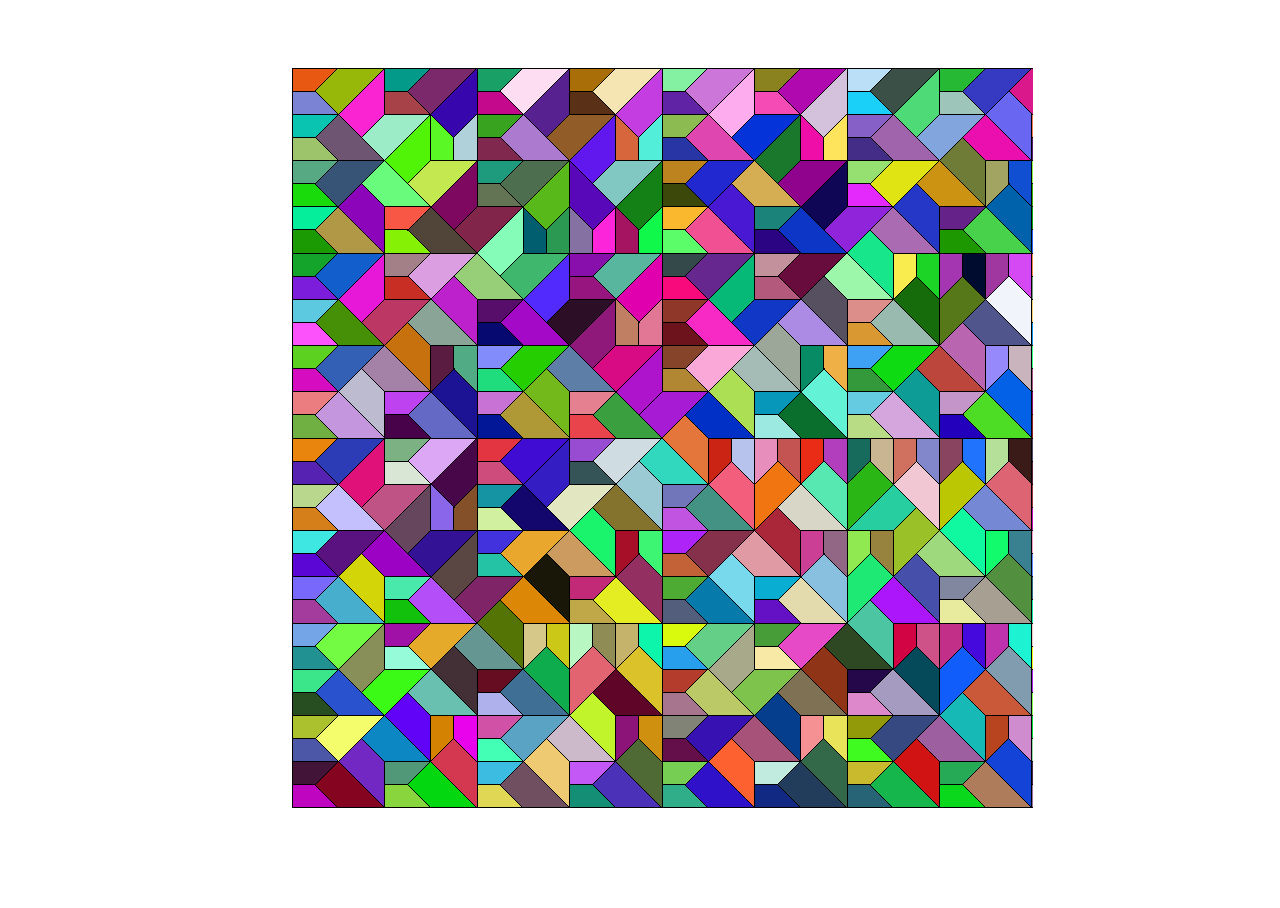}
\vskip -6mm
\caption{Tiling based on sporadic generating pairs C in
Figure~\ref{fig:spor}; see Example~\ref{ex:spor}.}
\label{fig:8}
\end{figure}

From the equations~\eqref{eq:I}, since the area of $p$ is equal to the sum of
the areas of $f(p)$, for $f\in F$, we must have
\begin{equation}
\label{eq:ratio}\sum\limits_{n=1}^{N} \, s^{2a_{n}}=1\text{.}
\end{equation}
Note that, for any set $\{ a_{1},a_{2}, \dots ,a_{N}\}$ of positive integers,
equation~\eqref{eq:ratio} has a unique positive solution $s$. We will, without
loss of generality, always assume that $g :=\gcd\,(a_{1},\dots,a_{N})=1$;
otherwise $s$ can be replaced by $s^{g}$.

Let $[N] = \{1,2,3, \dots,N \}$. Denote by $[N]^{*}$ the set of all finite
strings over the alphabet $[N]$ and by $[N]^{\omega}$ the set of all infinite
strings over the alphabet $[N]$. For $\sigma\in[N]^{*}$, the length of
$\sigma$ is denoted $|\sigma|$. The following simplifying notation is useful.
For $\sigma= \sigma_{1}\, \sigma_{2}\, \dots\, \sigma_{k} \in[N]^{*}$ let

\[
e(\sigma) := \sum_{i=1}^{|\sigma|} a_{\sigma_{i}} \qquad\qquad\qquad
e^{-}(\sigma) := \sum_{i=1}^{|\sigma|-1} a_{\sigma_{i}}
\]
\[
\begin{aligned} f_{\sigma} &:= f_{\sigma_1}\circ f_{\sigma_2 } \circ \cdots
\circ f_{\sigma_k} \\ f_{-\sigma} &:= f_{\sigma_1}^{-1}\circ f_{\sigma_2
}^{-1} \circ \cdots \circ f_{\sigma_k}^{-1}. \end{aligned}
\]
For $\theta\in[N]^{\omega}$ let
\[
\theta|k := \theta_{1} \, \theta_{2}\, \dots\theta_{k}.
\]

From a single generating pair $(p,F)$, a potentially infinite number of
self-similar polygonal tilings will be constructed. There are three steps in
the construction. All tiling figures in this paper are based
on the construction in Definition~\ref{def:scaled}.

\begin{definition}
\label{def:scaled} Let the generating pair $(p,F)$ and $\theta\in[N]^{\omega}$
be fixed.
\vskip 2mm

(1) For each positive integer $k$ and each $\sigma\in\lbrack N]^{\ast}$,
construct a single tile $t(\theta,k,\sigma)$ that is similar to $p$:
\[
t(\theta,k,\sigma):=(f_{-(\theta\,|\,k)}\circ f_{\sigma})(p).
\]
\vskip2mm (2) Form a patch $T(\theta,k)$ consisting of all tiles
$t(\theta,k,\sigma)$ for which $\sigma$ satisfies a certain property:

\[
T(\theta,k):=\left\{  t(\theta,k,\sigma)\,:\,e(\sigma) \geq e(\theta|k) >
e^{-}(\sigma)\right\}  .
\]
\vskip2mm (3) The final tiling ${T}(\theta)$, depending only on $\theta$, is
the nested union of the patches $T(\theta,k)$:

\[
{T}(\theta):=T(p,F,\theta):=\bigcup_{k\geq1}T(\theta,k).
\]
The tiling $T(\theta)$ is called a $\theta$-\textbf{tiling generated by the
pair} $(p,F)$. Each set $t(\theta,k,\sigma)\in T(\theta)$ is a tile of
$T(\theta)$.
\end{definition}

\vskip 2mm

The patches $T(\theta,k)$ are nested because every tile in $T(\theta,k)$ is a
tile in $T(\theta, k+1)$:
\[
f_{-(\theta|{k})}\circ f_{\sigma}(p) =f_{-(\theta|{k})}\circ(f_{\theta_{k+1}
})^{-1}\circ f_{\theta_{k+1}}\circ f_{\sigma}(p)=f_{-(\theta| {k+1})} \circ
f_{\theta_{k+1}\sigma}(p).
\]
Figure~\ref{fig:tree} illustrates the tree-like structure underlying the
construction of $T(\theta,3)$, where the generating pair is, for example, the
golden bee of Figure~\ref{figure1} and $\theta= 1\, 2 \,1 \cdots$. The eight
tiles in $T(\theta,3)$ are represented by the leaves of the tree, three of
these tiles (in black) are small and five (in red) are large (larger by a
factor of $\sqrt{\tau}$). The numbers on the edges are $a_{1} =1 $ and $a_{2}
=2$. Each sequence $\sigma$ of edge labels from the root to a leaf satisfies
$e(\sigma) \geq e(\theta|3) = 4 > e^{-}(\sigma)$. The numbers on the leaves
are $e(\sigma)$ for the corresponding sequence.

\begin{figure}[tbh]
\vskip -4mm 
\centering
\includegraphics[width=11cm, keepaspectratio]{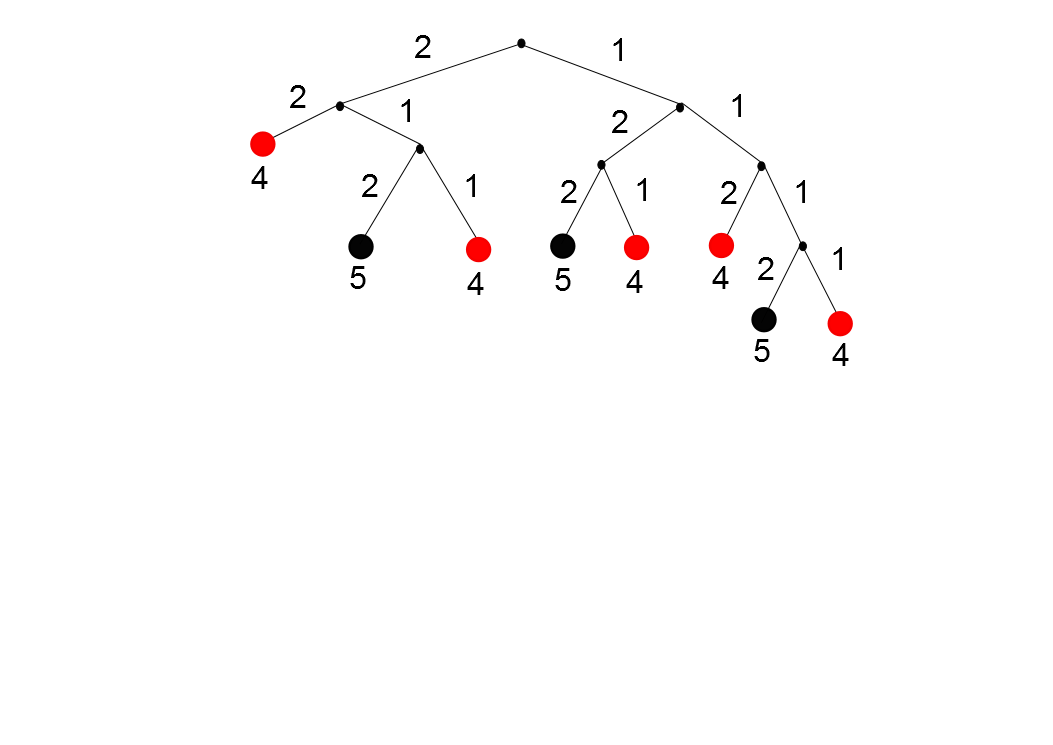}
\vskip -4cm
\caption{The tree structure underlying the set $T(\theta,3)$ for the
golden bee.}
\label{fig:tree}
\end{figure}

An issue is that $T(\theta)$, for some $\theta\in[N]^{\omega}$, may cover just
a ``wedge" --- a closed subset of the plane between two rays, for example a
quadrant of the plane. That this almost never occurs is reflected in
Theorem~\ref{thm:main0} and is discussed in detail in Question 2 of
Section~\ref{sec:ss}.

Our method, encapsulated in Defintion~\ref{def:scaled}, assumes a generating
pair $(p,F)$ but does not find one. Examples appear in
Section~\ref{sec:examples}, and questions concerning their existence appear in
Section~\ref{sec:remarks}.

\begin{theorem}
\label{thm:main0} For any generating pair, there exist infinitely many
$\theta\in\lbrack N]^{\omega}$ for which $T(\theta)$ is a self-similar
polygonal tiling.
\end{theorem}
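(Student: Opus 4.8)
The plan is to exhibit an explicit infinite family of sequences $\theta$ for which $T(\theta)$ satisfies all three requirements in the definition of a self-similar polygonal tiling: it tiles the plane, it is quasiperiodic, and it has finite order. Finite order is essentially built into the construction: since every tile has the form $(f_{-(\theta|k)}\circ f_\sigma)(p)$ and $f_{-(\theta|k)}$ is a similitude, each tile is similar to $p$, so the prototile set has at most one shape up to similarity; one must only check that there are finitely many up to \emph{congruence}, which follows from the fact that $f_{-(\theta|k)}\circ f_\sigma$ has scaling ratio $s^{e(\sigma)-e(\theta|k)}$, and the defining inequality $e(\sigma)\ge e(\theta|k)>e^-(\sigma)$ pins $e(\sigma)-e(\theta|k)$ into the finite range $\{0,1,\dots,\max_n a_n - 1\}$. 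So the crux is tiling and quasiperiodicity.

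The natural source of self-similarities is the shift on $\theta$. First I would observe that if $\theta$ is \emph{eventually periodic}, say $\theta = \theta_1\cdots\theta_m\,\overline{\theta_{m+1}\cdots\theta_{m+p}}$, then applying $f_{-(\theta|m)}^{-1}\circ f_{-(\theta|(m+p))}$ — a similitude with scaling ratio $s^{-e(\theta_{m+1}\cdots\theta_{m+p})}>1$ — should map $T(\theta)$ onto itself after relabeling, because the shifted sequence generating the patches $T(\theta,k)$ for large $k$ agrees with $\theta$ up to the periodic block. More precisely, the identity $f_{-(\theta|{k+1})}\circ f_{\theta_{k+1}\sigma} = f_{-(\theta|k)}\circ f_\sigma$ already exhibited in the excerpt, iterated across one period, gives the candidate self-similarity $\phi = f_{\theta_{m+1}}^{-1}\circ\cdots\circ f_{\theta_{m+p}}^{-1}$ (up to conjugation by $f_{-(\theta|m)}$); one checks $\phi(t)$ is a disjoint union of tiles of $T(\theta)$ for each tile $t$ by pushing the index $k$ up by $p$ and using that the patches are nested and exhaustive. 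Quasiperiodicity then follows from self-similarity together with the hierarchical, finite-order structure: any patch $U$ sits inside some $T(\theta,k)$, hence inside the image under $\phi^{-j}$ of a single tile for large $j$; since there are only finitely many tile shapes and each large tile $\phi^j(p)$ decomposes into a bounded-complexity patch, a standard pigeonhole/compactness argument over the finitely many ``local configurations'' produces the required radius $R$. So the bulk of the work is: (i) the relabeling lemma showing $\phi$ is a genuine self-similarity for eventually periodic $\theta$, and (ii) deducing quasiperiodicity from self-similarity plus finite order — both of which are the general-case analogues of what Section~\ref{sec:intro} did by hand for the golden bee.

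The main obstacle will be the ``wedge'' problem flagged just before the theorem statement: for a given $\theta$, the union $T(\theta)=\bigcup_k T(\theta,k)$ need not exhaust the plane — it might fill only a cone. The resolution is that the direction in which the patches $T(\theta,k)$ grow is governed by the rotational parts of the maps $f_{-(\theta|k)}$, and one must choose the periodic block so that these rotational parts are \emph{dense} in (or generate a transitive action on) directions, equivalently so that the accumulated rotation angle is an irrational multiple of $2\pi$ or, in the reflection case, behaves appropriately. I would argue that among the infinitely many eligible eventually periodic $\theta$ — one has total freedom in choosing the preperiod and can choose among infinitely many admissible periodic blocks — only those lying in a measure-zero or otherwise thin set fail the rotation-density condition, so infinitely many good $\theta$ remain. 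Making ``the patches grow in all directions, hence tile the plane'' precise — showing every point of the plane eventually lies in some $T(\theta,k)$, not merely that the areas diverge — is the technical heart; it amounts to proving that for a suitable periodic block the sets $f_{-(\theta|k)}(p)$ (which contain a ball of radius growing like $s^{-e(\theta|k)}$) have centers whose arguments are equidistributed, so their union swallows any fixed disk. This is exactly the content deferred to Question~2 of Section~\ref{sec:ss}, and I expect the theorem's proof to invoke a lemma of that form.
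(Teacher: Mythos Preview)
Your diagnosis of the wedge problem is off, and this is the one place where the proposal has a real gap. You frame ``$T(\theta)$ tiles the plane'' as a question about the rotational parts of $f_{-(\theta|k)}$ being equidistributed, and propose to pick periodic blocks whose accumulated rotation is an irrational multiple of $2\pi$. But this can simply fail: there are generating pairs in which every $f_n$ has rational rotation angle (indeed rotation angle zero --- take a rectangle rep-tile with all $f_n$ pure translations-plus-scalings), so \emph{no} choice of $\alpha$ yields irrational rotation, and your density mechanism never gets off the ground. Even when irrational rotations are available, turning ``the arguments of the centers are equidistributed'' into ``the union of the $f_{-(\theta|k)}(p)$ swallows every disk'' is not automatic, since the expanding copies are anchored at a common fixed point rather than at independent centers.

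The paper's resolution is both simpler and robust. For periodic $\theta=\overline{\alpha}$, the union of the patches is $\bigcup_{k\ge 1}(f_{-\alpha})^k(p)$. If $\alpha$ is chosen so that $f_\alpha(p)$ lies in the \emph{interior} $p^o$ of $p$ --- which is possible for most sufficiently long $\alpha$, since by the nested refinement $p=\bigsqcup_{\sigma\in S_n} f_\sigma(p)$ the tiles $f_\sigma(p)$ shrink to points and all but boundary-touching ones lie in $p^o$ --- then the unique fixed point of the contraction $f_\alpha$ lies in $p^o$. Expanding about an interior fixed point by the similitude $f_{-\alpha}$ of ratio $>1$ forces $\bigcup_k (f_{-\alpha})^k(p)=\mathbb{R}^2$ by the contraction mapping theorem. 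No rotation hypothesis is needed, and the argument works for \emph{every} generating pair. Prepending an arbitrary finite $\beta$ with $e(\beta)=e(\alpha)$ then gives infinitely many eventually periodic $\theta$ with the same property.

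Two smaller remarks. First, your self-similarity argument for periodic $\theta$ is correct and matches the paper's Theorem~\ref{thm:ss}; the extension to eventually periodic $\theta=\beta\overline{\alpha}$ with $e(\beta)=e(\alpha)$ goes via a congruence lemma (same tail, same $e$-value $\Rightarrow$ congruent tilings), which you gesture at but should state explicitly. Second, the paper proves quasiperiodicity \emph{directly} for every $\theta$ with $T(\theta)$ covering the plane, using only the refinement structure $Q_m=\{f_\sigma(p):\sigma\in S_m\}$; it does not go through self-similarity. Your route (self-similarity $+$ finite order $\Rightarrow$ repetitivity) is standard and would also work for the periodic $\theta$ you construct, but it proves less.
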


\section{Examples.} \label{sec:examples}

Viewed geometrically, equation~\eqref{eq:IFS} states that polygon $p$ is a
disjoint union of the smaller similar polygons $f_{1}(p), f_{2}(p), \dots,
f_{N}(p)$. Tilings of polygons by smaller polygons has a long history. For
example, in a 1940 paper, Brooks, Smith, Stone, and Tutte \cite{BSST}
investigated the problem of tiling a rectangle with squares of different
sizes. In 1978 Duijvestijn \cite{D}, by computer, showed that the smallest
possible number of squares in a tiling of a larger square by smaller squares
of different sizes is 21. In general, the term for a tiling of a polygon $p$
by pairwise non-congruent smaller similar copies of $p$ is a \textit{perfect}
tiling. A tiling of a polygon $p$ by smaller similar copies of $p$, all
congruent, is called a \textit{rep-tiling} and $p$ is called a
\textit{rep-tile}. The term was coined by S. W. Golomb \cite{Gol}; also see
\cite{Gardner}. For a generating pair, the smaller similar copies of $p$ need
not be pairwise congruent nor pairwise non-congruent. The term for a polygon
$p$ that is the disjoint union of smaller similar polygons seems to be
\textit{irreptile}; see \cite{R,S,Sch}. For $(p,F)$ to be a generating pair,
the polygon $p$ must be an irreptile that satisfies the ratio condition in equation~\eqref{eq:I}.

Examples~\ref{ex:RT} and \ref{ex:trap} below provide two infinite families of
generating pairs. Example~\ref{ex:spor} provides a few of what we call
sporadic examples. Self-similar polygonal tilings of order $1$ are fairly
common because there are many known rep-tiles. Therefore self-similar
polygonal tilings of higher order, not being prevalent, are illustrated in
this section.
 
\begin{figure}[htb]
\centering
\includegraphics[width=5cm, keepaspectratio]{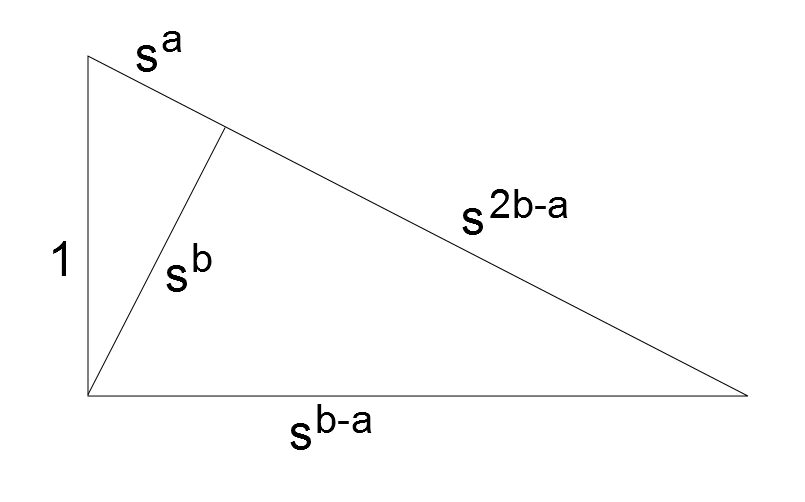} 
\vskip -3mm
\caption{A right triangle in the family $p(a,b)$; see Example~\ref{ex:RT}.}
\label{fig:RT}
\end{figure}

\begin{figure}[htb]
\vskip -3mm 
\centering
\hskip -3mm 
\includegraphics[width=7.5cm, keepaspectratio]{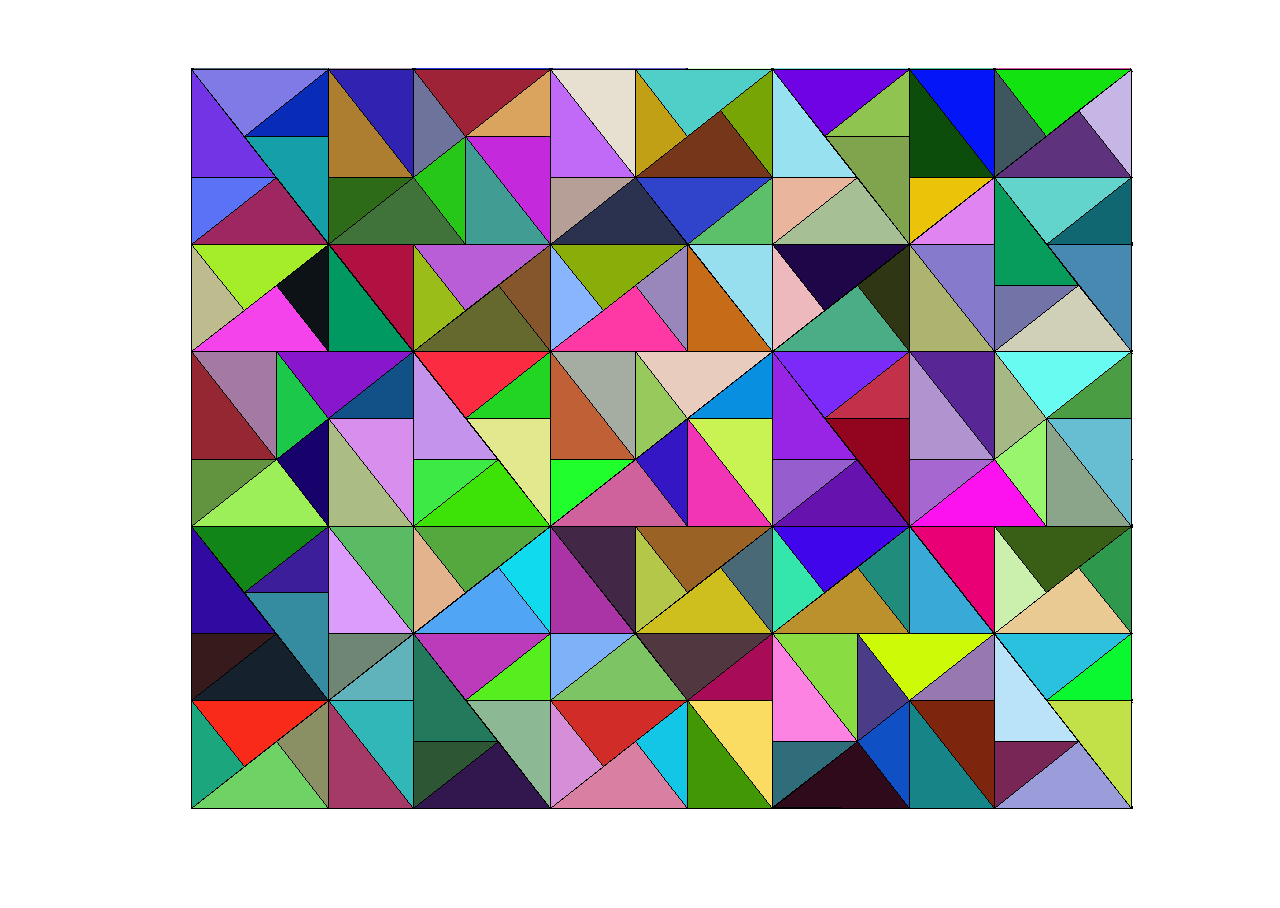}
\hskip -8mm
\includegraphics[width=7.5cm, keepaspectratio]{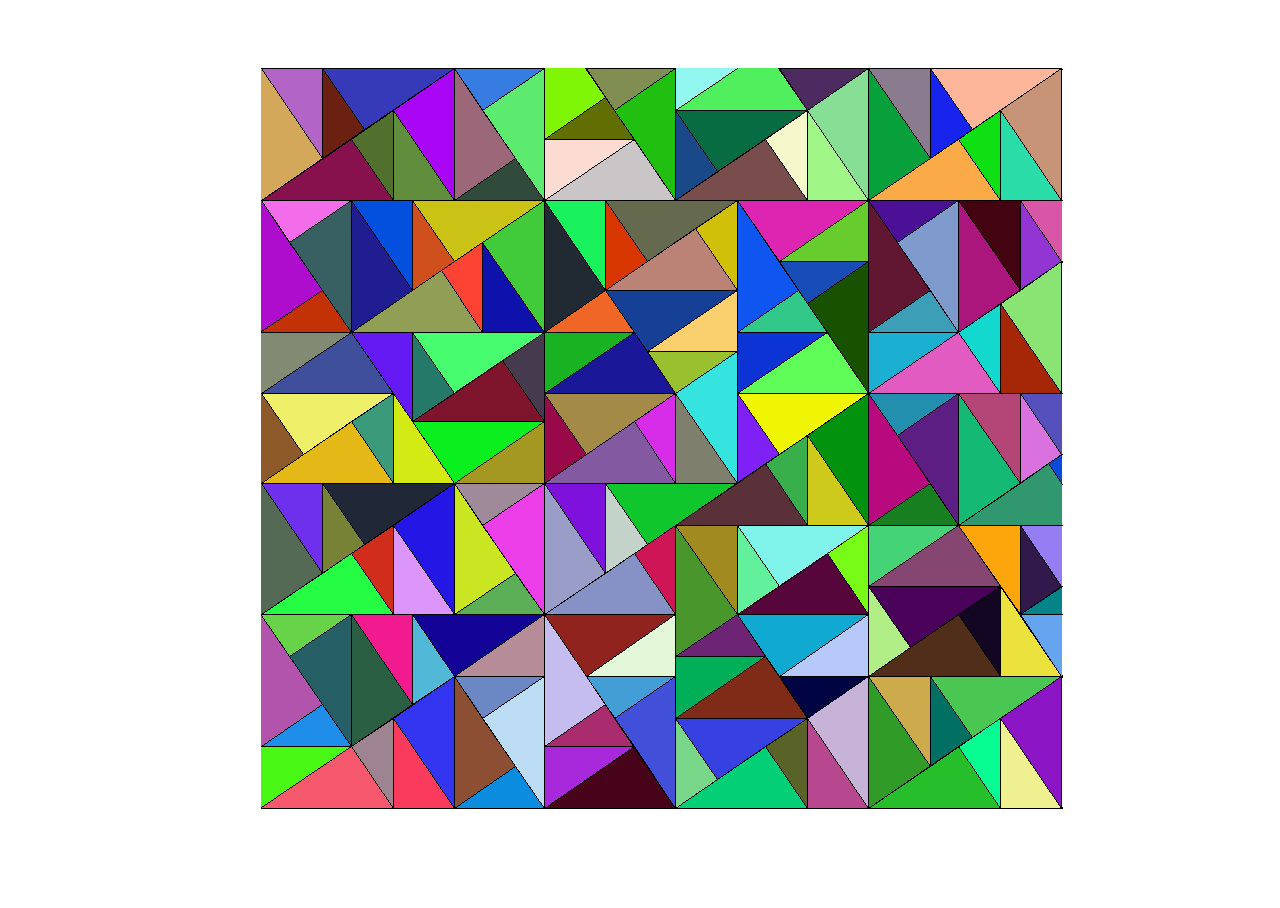}\caption{Two tilings
by right triangles, based on $p(2,1)$ and $p(3,1)$, respectrively; see
Example~\ref{ex:RT}.}
\label{fig:6}
\end{figure}

\begin{example}
[Right Triangles]\label{ex:RT} Consider a right triangle decomposed into two
smaller similar triangles; see Figure~\ref{fig:RT}. For every distinct pair of
positive integers $a,b$, let $s^{2}$ be the unique positive solution of $x^{a}
+ x^{b} = 1$; this is equation~\eqref{eq:ratio}. The triangle $p(a,b)$ in
Figure~\ref{fig:RT} is an irreptile for which the scaling ratios, as given in
\eqref{eq:I}, are:
\[
r_{1} = s^{a}, \qquad\qquad\qquad r_{2} = s^{b}.
\]
If we denote the corresponding set of similitudes by $F(a,b) = \{f_{1}
,f_{2}\}$, then $\left(  p(a,b), F(a,b) \right)  $, where $a > b \geq1$, is an
infinite family of generating pairs. Figure~\ref{fig:6} illustrates two
corresponding self-similar tilings by right triangles, one of order $2$, the
other of order $3$.
\end{example}

\begin{example}
[Trapezoids]\label{ex:trap} Consider a trapezoid decomposed into four smaller
similar trapezoids as in Figure~\ref{fig:4}. The length $w$ has the form $w =
s^{(b-a)/2}$, where $a > b \geq1$ are any two positive integers of the same
parity, and $s$ is the unique solution of $x^{a} + x^{b} = 1$, coming from
equation~\eqref{eq:ratio}: $(x^{a}+x^{b})^{2} = x^{2a} + x^{a+b} +x^{a+b} +
x^{2b} = 1$. The trapezoid, denoted $q(a,b)$, is an irreptile with scaling
ratios:
\[
r_{1} = s^{a}, \qquad\qquad\qquad r_{2}= r_{3} = s^{(a+b)/2}, \qquad
\qquad\qquad r_{4} = s^{b}.
\]
The tiling on the right in Figure~\ref{fig:4} is a self-similar polygonal
tiling based on the case $a=3, b=1$.
\end{example}

\begin{figure}[htb]
\centering
\vskip -6mm
\hskip 5mm \includegraphics[width=.3 \textwidth]{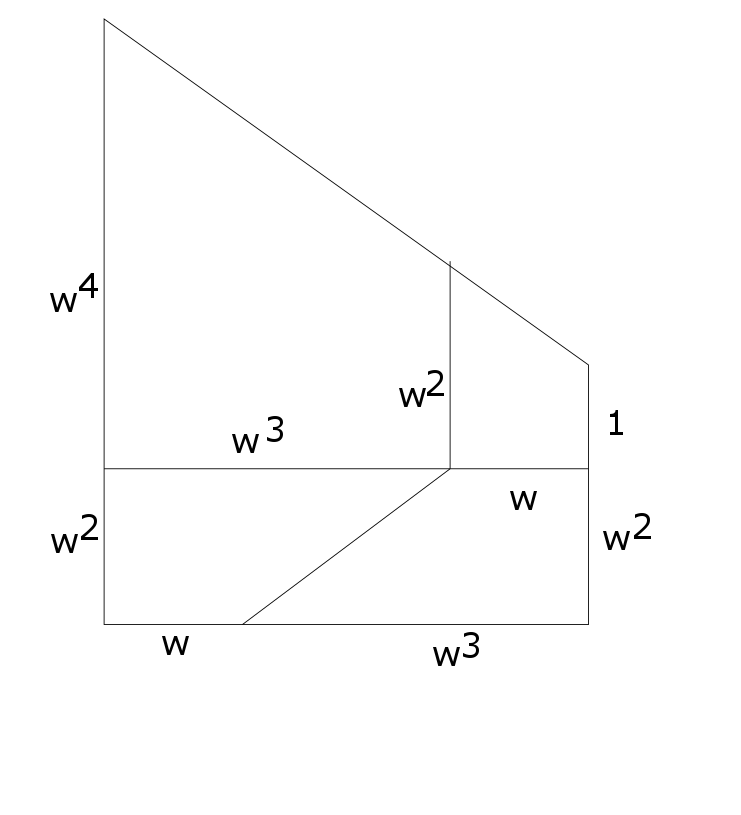} \includegraphics[width=.6 \textwidth]{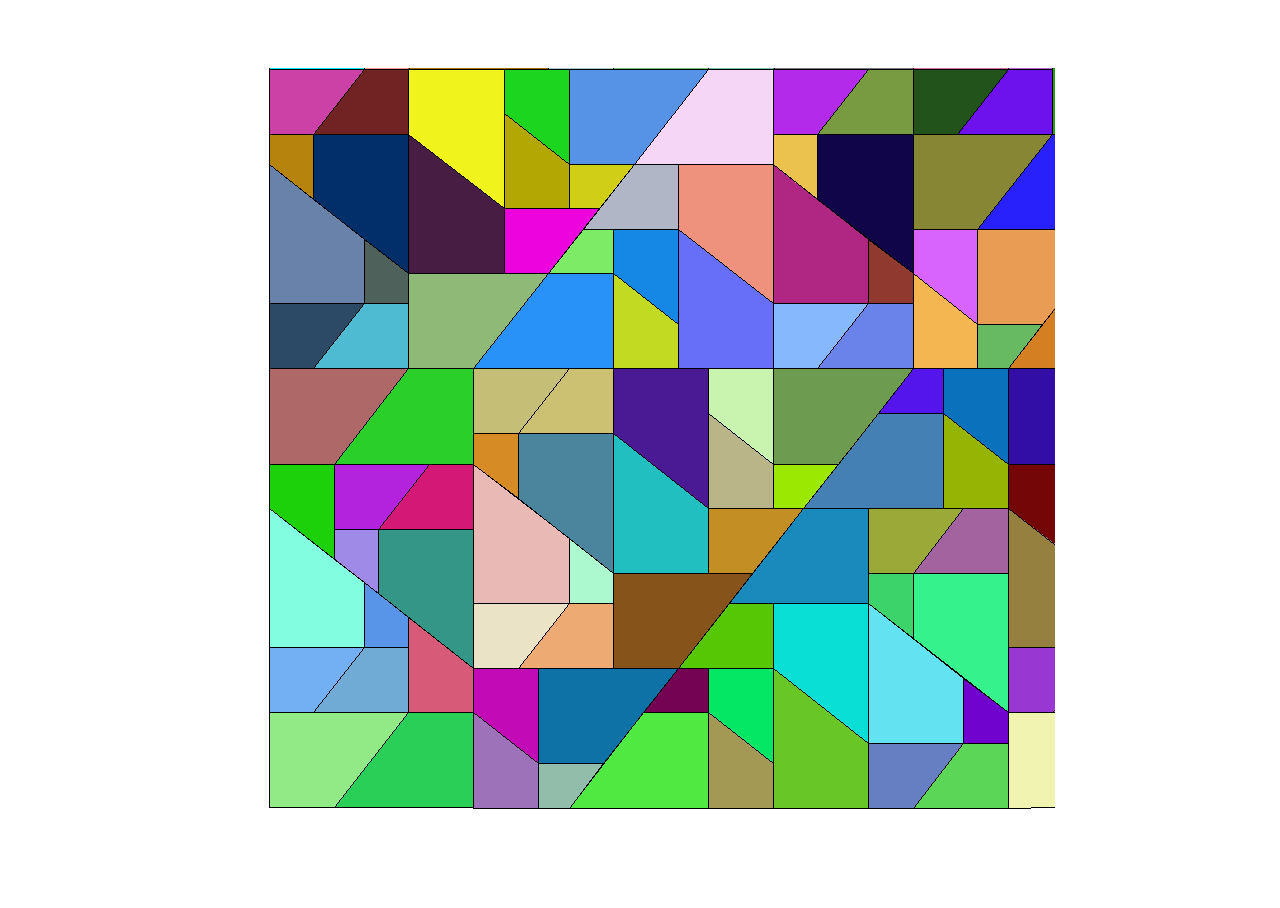} 
\vskip -3mm
\caption{A trapezoid irreptile and a tiling of order $3$ based on
$q(3,1)$; see Example~\ref{ex:trap}.}
\label{fig:4}
\end{figure}

As mentioned in Section~\ref{sec:intro}, the ratio of large to small tiles in
any self-similar golden tiling is, in the limit, the golden ratio. In general,
given a generating pair $(p,F)$, let $\{a_{1},a_{2},\dots,a_{N}\}$ and $s$ be
as in Definition~\ref{def:GP}. Further, let $M=\max\{a_{i}\,:\,i\in\lbrack
N]\}$ and let $D_{R}$ be a disk of radius $R$ centered at the origin. For
$i=1,2,\dots,M$, let

\[
d_{i}=\lim_{R\rightarrow\infty}\frac{\text{the number of tiles congruent to
$s^{i-1}(p)$ in $D_{R}$ }}{\text{the total number of tiles in $D_{R}$}}.
\]
For any golden bee tilings, the proportion of large tiles is $d_{1}
=1/\tau\approx0.6180$ and the proportion of small tiles is $d_{2}
=1-1/\tau\approx0.3820$, where $\tau$ is the golden ratio. For the trapezoid
tiling $Q(3,1)$ of Example~\ref{ex:trap} in Figure~\ref{fig:4}, the
proportions are, $d_{1}\approx0.3826,\;d_{2}\approx0.4392,\;d_{3}
\approx0.1781$, the proportion for the largest of the three tiles being
$d_{1}$, the proportion for the smallest being $d_{3}$. These numbers are
derived as follows. Let

\[C=
\begin{pmatrix}
c_{1} & 1 & 0 & 0 & \cdots & 0\\
c_{2} & 0 & 1 & 0 & \cdots & 0\\
&  & \ddots &  &  & \\
c_{M-1} & 0 & 0 & 0 & \cdots & 1\\
c_{M} & 0 & 0 & 0 & \cdots & 0
\end{pmatrix},
\]
where $c_{i},\,i=1,2,\dots,M,$ is the number of functions in $F$ with
scaling ratio $s^{i}$. Letting $\mathbf{c}=(c_{1},c_{2},\dots,c_{M})$ and
$\mathbf{d}=(d_{1},d_{2},\dots,d_{M})$, it can be shown that,

\[
\mathbf{d}=\frac{C^{n}\ast\mathbf{c}}{\mathbf{j}\ast(C^{n}\ast\mathbf{c})},
\]
where $\mathbf{j}$ is the all ones vector. This holds for any $\theta$-tiling
by the generating pair $(p,F)$, independent of $\theta$.

\begin{figure}[htb]
\centering
\vskip 2mm
\includegraphics[width=12cm, keepaspectratio]{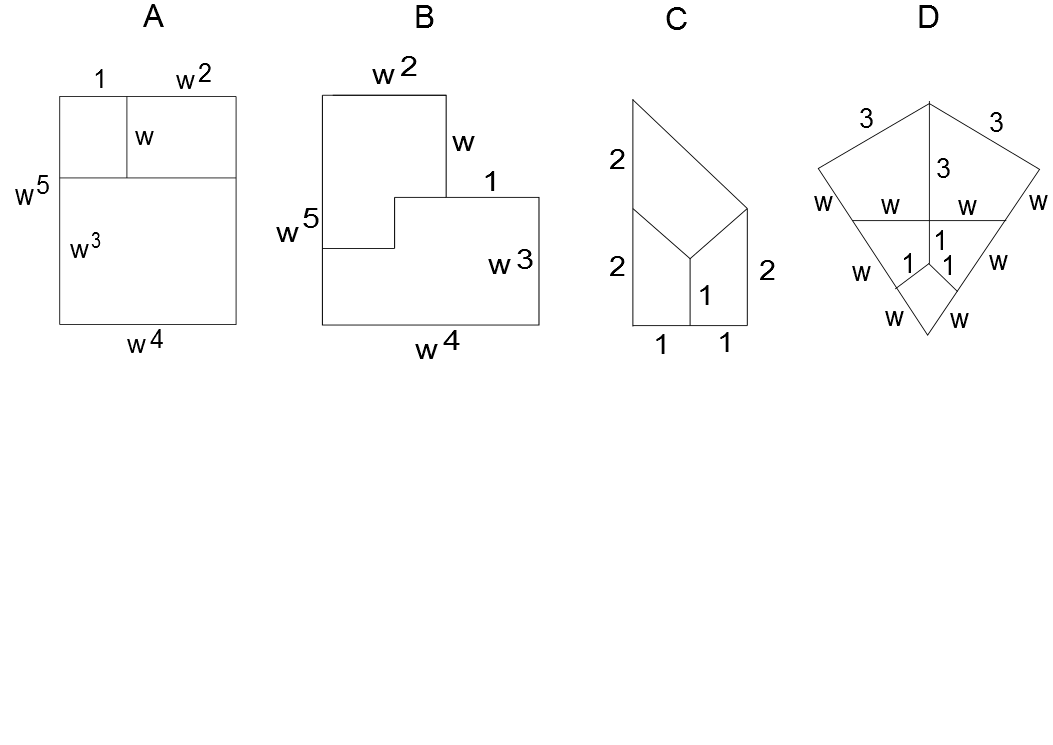}
\vskip -4.2cm
\caption{Sporadic irreptiles; see Example~\ref{ex:spor}.}
\label{fig:spor}
\end{figure}

\begin{example}
[Sporadic generating pairs]\label{ex:spor} The irreptiles in
Figure~\ref{fig:spor} do not belong to an infinite family. For that reason we
call them and the associated generating pairs \textit{sporadic}. For polygons
A and B, the constant $w=\sqrt{\tau}$, where $\tau$ is the golden ratio; in
figure D, the constant is $w=\sqrt{3}$. The scaling ratios are:
\[
\begin{aligned} A: & \quad r_1 = \frac{1}{\sqrt{\tau}}, \; r_2 =
\frac{1}{\tau\sqrt{\tau}}, \; r_3 = \frac{1}{\tau ^2} \\ B: & \quad r_1=
\frac{1}{\sqrt{\tau}},\; r_2 = \frac{1}{\tau} \\ C: & \quad r_1 =
\sqrt{2}/2, \; r_2 = r_3 =1/2\\ D: & \quad r_1 = r_2 = \sqrt{3}/3, \; r_3 =
r_4 = r_5 = 1/3. \end{aligned}
\]
The golden bee in figure B was discussed in Section~\ref{sec:intro}.
Figures~\ref{fig:5}, \ref{fig:2}, \ref{fig:A}, and \ref{fig:8} illustrate self-similar
polygonal tilings based on D, B, A, and C, respectively. Other self-similar
polygonal tilings based on sporadic generating pairs appear in
Figures~\ref{fig:3}, \ref{fig:9} and \ref{fig:10}. And there are many more
sporadic generating pairs.
\end{example}

\begin{example}
[Reducible generating pairs]\label{ex:reducible} Given a generating pair
$(p,F)$, there is a trivial way to obtain infinitely many related generating
pairs. Replace a function $f\in F$ (or several functions) by the set of
functions $\{ f\circ f_{n} \,:\, n=1,2,\dots, N\}$. An example is depicted
geometrically in Figure~\ref{fig:rect}, where one rectangle in the subdivision
of $p$ (right figure) is replaced by three smaller similar rectangles (left
figure). More generally, call a generating pair $(p,F)$ \textit{reducible} if
there is a proper subset S of the tiles in $p$ such that $\bigcup\{t : t \in
S\}$ is similar to $p$; otherwise call $(p,F)$ \textit{irreducible}. A
$\theta$-tiling is called \textit{(ir)reducible} if its generating pair is (ir)reducible.

\begin{figure}[htb]
\centering
\includegraphics[width=6cm, keepaspectratio]{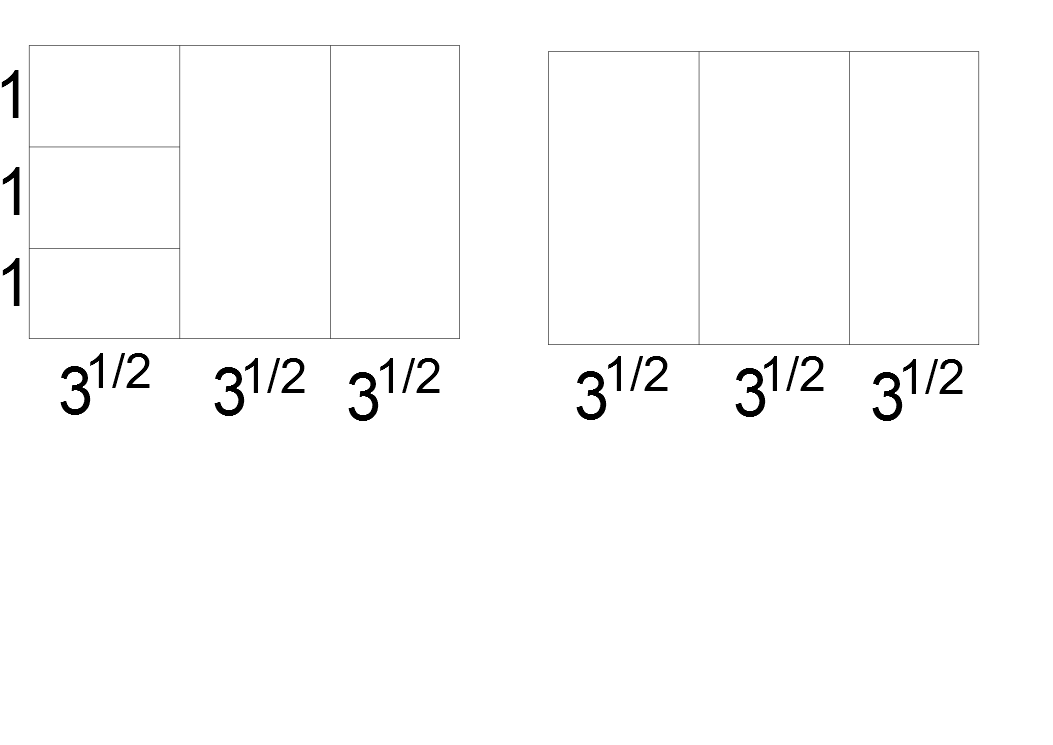}
\vskip -1.5cm
\caption{A reducible irreptile.}
\label{fig:rect}
\end{figure}
\end{example}

\begin{example}
[An irreptile that does not induce a generating pair]\label{ex:nGP} Most
irreducible irreptiles seem not to induce a generating pair; they fail to
satisfy the ratio condition in equation~\eqref{eq:I}. The equilateral triangle
in Figure~\ref{fig:NoGen} is subdivided into 6 smaller similar equilateral
triangles. The two scaling ratios are $1/3$ and $2/3$.
However, the existence of a real number $s$ and integers $a,b$ such that
$s^{a}=1/3$ and $s^{b}=2/3$ would imply that $\log3/\log2$ is rational.

\begin{figure}[tbh]
\vskip -3mm
\centering
\includegraphics[width=4cm, keepaspectratio]{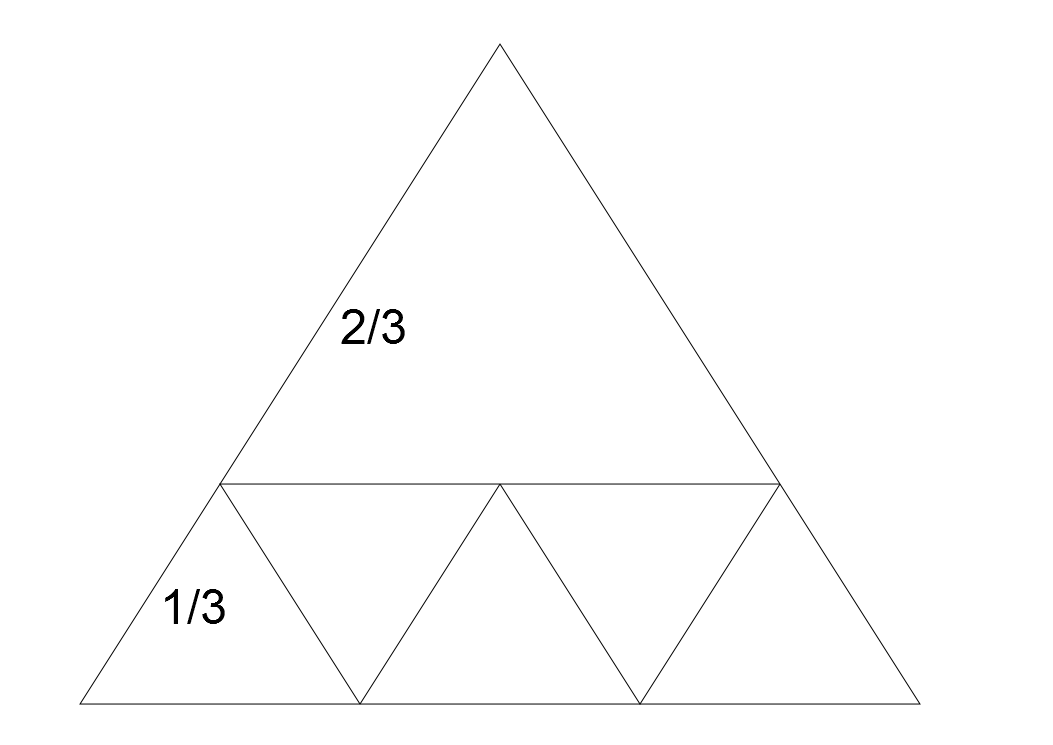} 
\caption{An irreptile that does not induce a generating pair; see Example~\ref{ex:nGP}.}
\label{fig:NoGen}
\end{figure}

\end{example}

\section{The fine points.}  \label{sec:ss}

For a given generating pair $(p,F)$, the following questions concerning
Theorem~\ref{thm:main0} are addressed in this section. Proofs of 
statements appear in Section~\ref{sec:proofs}. A string $\theta\in\lbrack N]^{\omega}$ is
\textit{periodic} if it is a concatenation of the form $\overline{\alpha
}:=\alpha\,\alpha\,\alpha\cdots$, where $\alpha$ is a finite string. A string
$\theta\in\lbrack N]^{\omega}$ is \textit{eventually periodic} if it is a
concatenation of the form $\beta\overline{\alpha}$, where $\alpha$ and $\beta$
are finite strings. 
\vskip 2mm

\noindent\textbf{Questions} \vskip 2mm For which $\theta\in[N]^{\omega}$

\begin{enumerate}
\item are distinct tiles in $T(\theta)$ pairwise disjoint?

\item does $T(\theta)$ fill the plane?

\item is $T(\theta)$ of finite order, and what is the order?

\item is $T(\theta)$ self-similar?

\item is $T(\theta)$ quasiperiodic?
\end{enumerate}

 \noindent Answers are summarized as follows. \vskip 3mm

\textbf{Answer 1}. Distinct tiles in $T(\theta)$ are pairwise disjoint for all
$\theta\in[N]^{\omega}$. This follows from Proposition~\ref{prop:1} in Section~\ref{sec:proofs}.  
\vskip 2mm

\textbf{Answer 2}. It is a tiling of the whole plane for ``almost all"
$\theta\in[N]\in[N]^{\omega}$ in the following three senses.

First, there are infinitely many eventually periodic strings $\theta$ for
which $T(\theta)$ tiles the entire plane. See Proposition~\ref{prop:full} in 
Section~\ref{sec:proofs}.

Second, $T(\theta)$ fills the plane for all disjunctive $\theta$. An infinite
string $\theta$ is \textit{disjunctive} if every finite string is a
consecutive substring of $\theta$. An example is the binary
\textit{Champernowne sequence}
\[
0\,1\,00\,01\,10\,11\,000\,001\cdots,
\]
formed by concatenating all finite binary strings in lexicographic order.
There are infinitely many disjunctive sequences in $[N]^{\omega}$ if $N\geq2$.
See \cite{tilings} for a discussion and proofs of this and the next statement.

Third, define a word $\theta\in\lbrack N]^{\omega}$ to be a \textit{random
word} if there is a $p>0 $ such that each $\theta_{k}$, for $k=1,2,\dots$, is
selected at random from $\{1,2,\dots,N\},$ where the probability that $\theta
_{k}=n$, for $n\in\lbrack N],$ is greater than\ or equal to $p,$ independent
of the preceeding outcomes. If $\theta\in[N]^{\omega}$ is a random word, then,
with probability $1$, the tiling $T(\theta)$ covers ${\mathbb{R}}^{2}$. \vskip 2mm

\textbf{Answer 3}. The tiling $T(\theta)$ is of finite order for all
$\theta\in[N]^{\omega}$. The order, i.e., the number of tiles in the prototile
set, is equal to $M = \max\{ a_{i} : i \in[N]\}$. See
Proposition~\ref{prop:tiling} in Section~\ref{sec:proofs}. 
\vskip 2mm

\textbf{Answer 4}. If $\theta=\overline{\alpha},\,\alpha\in\lbrack N]^{\ast}$
is periodic, then $T(\theta)$ is self-similar with self-similarity map
$\phi=f_{-\alpha}$.

To better understand this answer, note that the set $[N]^{*}$ is a semigroup, where
the operation is concatenation. Let $\mathbb{T}$ denote the set of all
$\theta$-tilings for the pair $(p,F)$. There is a natural semigroup action
\[
{\widehat{\alpha}} : \mathbb{T }\rightarrow\mathbb{T}
\]
of $[N]^{*}$ on $\mathbb{T}$ defined by:
\[
{\widehat{\alpha}} ( T(\theta) ) = T( \alpha\, \theta)
\]
for $\alpha\in[N]^{*}$ and $T(\theta) \in\mathbb{T}$. If $\theta=
\overline\alpha$ is periodic, then clearly $\widehat{\alpha}(T(\theta)) =
T(\theta)$. It can then be shown that any such fixed point $T(\theta)$ of
$\widehat{\alpha}$ is self-similar.

More generally, if $\theta$ is eventually periodic of the form $\theta=
\beta\overline\alpha$ where $e(\beta) = e(\alpha)$, then $T(\theta)$ is
self-similar. This statement follows from the one above for the following
reason. Call two tilings \textit{congruent} if one can be obtained from the
other by a Euclidean motion, i.e., by a translation, rotation, reflection or
glide. Under the given assumptions, tilings $T(\overline\alpha)$ and
$T(\beta\overline\alpha)$ can be shown to be congruent. It is also not hard to
show that if $T(\theta)$ is self-similar and $T(\theta^{\prime})$ is congruent
to $T(\theta)$, then then $T(\theta^{\prime})$ is also self-similar. 

The results above are stated and proved formally in Theorem~\ref{thm:ss} and 
Corollary~\ref{cor:ss} in  Section~\ref{sec:proofs}. 
\vskip 2mm

\textbf{Answer 5}. The tiling $T(\theta)$ is quasiperiodic for all $\theta$
such that $T(\theta)$ tiles ${\mathbb{R}}^{2}$.  See Theorem~\ref{thm:quasi}
in Section~\ref{sec:proofs}. 
\vskip 2mm

In light of the answers to Questions 1-5, there are infinitely many $\theta$ that meet the
requirements for $T(\theta)$ to be a self-similar polygonal tiling, thus
verifying Theorem~\ref{thm:main0}.

\begin{figure}[htb]
\includegraphics [width=12cm, keepaspectratio]{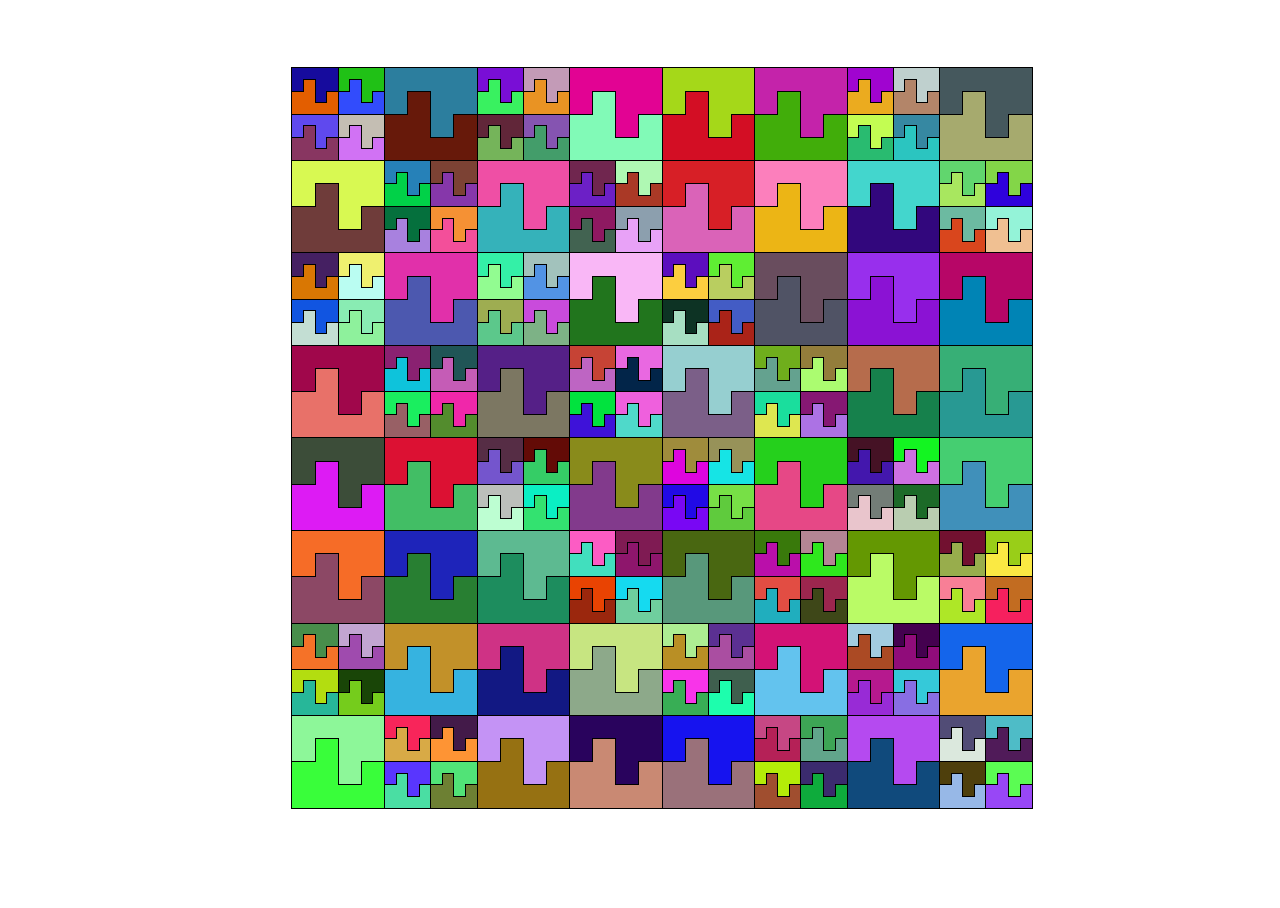} \vskip -4mm\caption{A
self-similar polygonal tiling of order $2$ based on a sporadic generating
pair; see Example~\ref{ex:spor}.}
\label{fig:9}
\end{figure}

\section{Proofs} \label{sec:proofs}

This section contains the proofs of many of the statements in Section~\ref{sec:ss}.

\begin{proposition}  \label{prop:1}
\label{prop:union} For any positive integer $n$, let $S_n = \{ \sigma \in
[N]^* \, : \, e(\sigma) \geq n > e^-(\sigma) \}$. Then 
\begin{equation*}
\bigsqcup \, \{ f_{\sigma}(p) \, : \, \sigma \in S_n \} = p.
\end{equation*}
\end{proposition}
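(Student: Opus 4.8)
The plan is to view $[N]^{*}$ as the vertex set of a rooted $N$-ary tree: the root is the empty string $\epsilon$, and the children of $\sigma$ are $\sigma 1,\dots,\sigma N$. Applying the similitude $f_\sigma$ to the generating-pair relation~\eqref{eq:I}, $p=\bigsqcup_{m\in[N]}f_m(p)$, gives the self-refinement identity $f_\sigma(p)=\bigsqcup_{m\in[N]}f_{\sigma m}(p)$, valid for every $\sigma\in[N]^{*}$; this union stays disjoint (interiors) because a similitude is a homeomorphism of the plane and so preserves the interior/boundary decomposition of a set. I will also use the elementary facts $e(\sigma m)=e(\sigma)+a_m$, $e^-(\sigma m)=e(\sigma)$, and $e^-(\sigma)\le e(\sigma)-1$ for $\sigma\ne\epsilon$, all immediate since each $a_i\ge 1$.

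First I would show that $S_n$ is a finite antichain in this tree. Finiteness: $\sigma\in S_n$ forces $|\sigma|-1\le e^-(\sigma)<n$, so $|\sigma|\le n$. Antichain: if $\sigma\in S_n$ were a proper prefix of $\tau\in S_n$, then truncating $\tau$ just past position $|\sigma|$ gives $e^-(\tau)\ge e(\sigma)\ge n$, contradicting $\tau\in S_n$. From the antichain property the sets $\{f_\sigma(p):\sigma\in S_n\}$ automatically have pairwise disjoint interiors: for incomparable $\sigma,\tau$ with longest common prefix $\rho$, write $\sigma=\rho m\sigma'$, $\tau=\rho m'\tau'$ with $m\ne m'$; then $f_\sigma(p)\subseteq f_{\rho m}(p)$ and $f_\tau(p)\subseteq f_{\rho m'}(p)$, and the latter two have disjoint interiors by~\eqref{eq:I} applied after $f_\rho$. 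Thus only the set equality $\bigcup_{\sigma\in S_n}f_\sigma(p)=p$ remains to be proved.

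I would prove this equality by induction on $n$. For $n=1$, the condition $e^-(\sigma)<1$ holds exactly for the one-symbol strings (longer strings have $e^-(\sigma)\ge 1$, and $e(\epsilon)=0<1$), so $S_1=[N]$ and the claim is~\eqref{eq:I} itself. For the inductive step, split $S_n=A\sqcup B$ with $A=\{\sigma\in S_n:e(\sigma)=n\}$ and $B=\{\sigma\in S_n:e(\sigma)\ge n+1\}$ (exhaustive since $e(\sigma)\ge n$ on $S_n$). The heart of the argument is the identity $S_{n+1}=B\sqcup\{\sigma m:\sigma\in A,\ m\in[N]\}$: each $\sigma\in B$ still lies in $S_{n+1}$ since $e(\sigma)\ge n+1$ and $e^-(\sigma)\le n-1$; each child $\sigma m$ of $\sigma\in A$ lies in $S_{n+1}$ since $e(\sigma m)=n+a_m\ge n+1>n=e^-(\sigma m)$; conversely, for $\tau\in S_{n+1}$ either $e^-(\tau)\le n-1$, which forces $\tau\in B$, or $e^-(\tau)=n$, in which case the parent $\tau'$ of $\tau$ has $e(\tau')=n$ and $e^-(\tau')\le n-1$, so $\tau'\in A$; and the two pieces are disjoint because children of $A$-vertices have $e^-=n$ while $B$-vertices have $e^-\le n-1$. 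Applying the self-refinement identity $\bigcup_{m\in[N]}f_{\sigma m}(p)=f_\sigma(p)$ to each $\sigma\in A$ then yields
\[
\bigcup_{\tau\in S_{n+1}}f_\tau(p)=\bigcup_{\sigma\in B}f_\sigma(p)\ \cup\ \bigcup_{\sigma\in A}f_\sigma(p)=\bigcup_{\sigma\in S_n}f_\sigma(p)=p
\]
by the induction hypothesis, which completes the proof.

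The main obstacle — the only part requiring genuine care — is the combinatorial identity relating $S_{n+1}$ to $S_n$: correctly matching the survivors $B$ against the newly split children of $A$, and ruling out double counting. Everything else is routine bookkeeping with the definitions of $e$ and $e^-$ and the fact that a similitude preserves interiors and boundaries.
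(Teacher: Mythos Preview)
Your proof is correct and follows essentially the same route as the paper: induction on $n$, with the inductive step driven by the decomposition $S_{n+1}=(S_n\cap S_{n+1})\sqcup\{\sigma m:\sigma\in S_n\setminus S_{n+1},\ m\in[N]\}$, which is exactly your $B\sqcup\{\sigma m:\sigma\in A\}$ once one notes that $S_n\setminus S_{n+1}=\{\sigma\in S_n:e(\sigma)=n\}$. Your additional explicit verification of the antichain property and the resulting pairwise disjointness is a worthwhile supplement that the paper leaves implicit in its use of the symbol $\bigsqcup$.
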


\begin{proof}
The proof is by induction on $n$. If $n=1$, then the equation above is
identical to equation~\eqref{eq:IFS}. Assuming that the equation is true for 
$n$, we will show that it is true for $n+1$. It is routine to check that 
\begin{equation*}
S_{n+1} = \left \{ \sigma \in S_n : \sigma \in S_n \cap S_{n+1} \right \}
\cup \left \{ \sigma \, k : \sigma \in S_n \setminus S_{n+1}, \, k=1,2,
\dots , N\right \}.
\end{equation*}
Now 
\begin{equation*}
\begin{aligned} \bigsqcup &\left \{ f_{\sigma}(p) \, : \, \sigma \in
S_{n+1}\right \} \\
 &= \bigsqcup \left \{f_{\sigma}(p) : \sigma \in S_n\cap S_{n+1} \right
\} \sqcup \bigsqcup \left \{ f_{\sigma\, k}(p) : \sigma \in S_n \setminus S_{n+1}, \,k=1,2
\dots, k \right \} \\ &= \bigsqcup \left \{f_{\sigma}(p) : \sigma \in S_n\cap S_{n+1}
\right \} \sqcup \left \{ f_{\sigma} \left ( \bigsqcup_{k=1}^N f_k(p) \right ) :
\sigma \in S_n \setminus S_{n+1} \right \} \\ &= \bigsqcup \left \{ f_{\sigma}(p) :
\sigma \in S_n\cap S_{n+1} \right \} \sqcup \bigsqcup \left \{ f_{\sigma} (p) : \sigma
\in S_n \setminus S_{n+1} \right \} \\ &= \bigsqcup \left \{ f_{\sigma}(p) : \sigma \in S_n
\right \} = p, \end{aligned}
\end{equation*}
where the second to last equality is by the induction hypothesis. 
\end{proof}

\begin{proposition}
\label{prop:full} Given generating pair $(p,F)$, there are infinitely many
eventually periodic strings $\theta$ such that $T(\theta)$ tiles ${\mathbb{R}%
}^2$.
\end{proposition}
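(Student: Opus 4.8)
The plan is to exhibit a single periodic word $\theta=\overline{\alpha}$ for which $T(\theta)={\mathbb R}^2$, and then to obtain infinitely many eventually periodic examples by prepending arbitrary prefixes. The first step is to reinterpret the patches geometrically: by Proposition~\ref{prop:1}, $\bigsqcup\{f_\sigma(p):\sigma\in S_{e(\theta|k)}\}=p$, so applying the similitude $f_{-(\theta|k)}$ shows that the tiles of $T(\theta,k)$ tile the scaled polygon $f_{-(\theta|k)}(p)$. Since the patches are nested, the support of $T(\theta)$ is the increasing union $P(\theta):=\bigcup_{k\ge1}f_{-(\theta|k)}(p)$ (disjointness of distinct tiles is already covered by Proposition~\ref{prop:1}). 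Hence it suffices to find eventually periodic $\theta$ with $P(\theta)={\mathbb R}^2$.

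For $\theta=\overline{\alpha}$ with $\alpha\in[N]^{*}$ of length $j$, we have $f_{-(\theta\,|\,mj)}=\psi^{m}$, where $\psi:=f_{-\alpha}$ is a similitude of ratio $s^{-e(\alpha)}>1$; thus $P(\overline{\alpha})\supseteq\bigcup_{m\ge1}\psi^{m}(p)$, and everything reduces to choosing $\alpha$ so that this union of dilated copies of $p$ exhausts the plane. The inverse $\psi^{-1}$ is a composite $f_{\rho}$ of maps of $F$, with $\rho$ the reverse word of $\alpha$, hence a contraction, and its unique fixed point $z^{\ast}$ is also the fixed point of $\psi$. If $z^{\ast}$ lies in the interior of $p$, then for every $z\in{\mathbb R}^2$ the iterates $\psi^{-m}(z)=f_{\rho}^{\,m}(z)$ converge to $z^{\ast}$ and so lie in $p$ for large $m$, i.e.\ $z\in\psi^{m}(p)$; therefore $\bigcup_{m}\psi^{m}(p)={\mathbb R}^2$.

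So the real issue, which I expect to be the main obstacle, is to arrange that $z^{\ast}$ is an interior point of $p$ rather than a boundary point --- otherwise $\bigcup_{m}\psi^{m}(p)$ is merely an angular sector, the ``wedge'' phenomenon noted before the statement of Theorem~\ref{thm:main0}. To do this I would first choose a finite word $\rho$ with $f_{\rho}(p)$ contained in the interior of $p$, and take $\alpha$ to be the reverse of $\rho$ (so that, as above, $\psi^{-1}=f_{\rho}$). Such a $\rho$ exists because, iterating \eqref{eq:I}, $p=\bigcup_{|\sigma|=k}f_{\sigma}(p)$ for every $k$, while $\mathrm{diam}(f_{\sigma}(p))\le(\max_{n}r_{n})^{k}\,\mathrm{diam}(p)\to0$: fixing any disk $D$ inside the interior of $p$ and taking $k$ large enough, the length-$k$ cylinder $f_{\sigma}(p)$ through the center of $D$ is contained in $D$, hence in the interior of $p$. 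With this choice, $f_{\rho}^{\,m}(p)\subseteq f_{\rho}(p)\subseteq\mathrm{int}\,p$ for all $m\ge1$, so $z^{\ast}\in\bigcap_{m}f_{\rho}^{\,m}(p)$ is interior, and $T(\overline{\alpha})$ tiles the plane.

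Finally, for the ``infinitely many'' claim, prepending any finite word $\beta$ changes nothing essential: since $f_{-(\beta\overline{\alpha}\,|\,|\beta|+mj)}=f_{-\beta}\circ\psi^{m}$ and $f_{-\beta}$ is a bijection of the plane, $P(\beta\overline{\alpha})\supseteq f_{-\beta}\bigl(\bigcup_{m}\psi^{m}(p)\bigr)={\mathbb R}^2$. Taking $\beta=c^{m}$ with $c$ any symbol different from $\alpha_{1}$ (which exists since $N\ge2$), for $m=0,1,2,\dots$, yields infinitely many distinct eventually periodic words $\theta$ for which $T(\theta)$ is a tiling of ${\mathbb R}^2$.
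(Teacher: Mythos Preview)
Your proof is correct and follows essentially the same route as the paper: identify the support of $T(\theta)$ with $\bigcup_k f_{-(\theta|k)}(p)$ via Proposition~\ref{prop:1}, find a finite word so that the associated contracting inverse has its fixed point in $p^{\circ}$, invoke the contraction mapping principle to see the expanding iterates exhaust ${\mathbb R}^2$, and then prepend arbitrary prefixes $\beta$. Your explicit choice of $\rho$ with $f_\rho(p)\subset p^{\circ}$ and then $\alpha=\text{reverse}(\rho)$ so that $(f_{-\alpha})^{-1}=f_\rho$ is in fact a little more careful than the paper's argument, which speaks of the fixed point of $f_\alpha$ while actually needing the fixed point of $(f_{-\alpha})^{-1}=f_{\text{reverse}(\alpha)}$.
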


\begin{proof}
It follows from Proposition~\ref{prop:union} that, if $n$ is sufficiently large,
then, for most finite strings $\alpha$ with $|\alpha| \geq n$, the set $%
f_{\alpha}(p)$ is contained in the interior $p^o$ of $p$.  Therefore, the 
unique fixed point of $f_{\alpha}$ lies in  $p^o$.  
With the notation $g^k$ meaning the $k$-wise composition of function
$g$,  $\bigcup_{k=1}^{\infty} (f_{-\alpha})^{k} (p)
 = {\mathbb{R}}^2$  by the Banach contraction mapping theorem.  For any such $\alpha$
and any finite string $\beta$, if $\theta = \beta \, \overline{\alpha}$,
then, 
\begin{equation*}
\bigsqcup \{t \, : \, t \in T(\theta)\} =\bigcup_{k=1}^{\infty}
f_{-(\theta|k)} (p) = f_{-\beta} \left (\bigcup_{k=1}^{\infty}
(f_{-\alpha})^{k} (p) \right )= {\mathbb{R}}^2,
\end{equation*}
the first equality by Proposition~\ref{prop:union} .
\end{proof}

\begin{proposition}
\label{prop:tiling} For any generating pair $(p,F)$ and $\theta\in[N]^{\omega
}$, the prototile set of $T(p,F, \theta)$ consists of $M = \max\{a_{i} \, : \,
i \in[N] \}$ tiles similar to $p$.
\end{proposition}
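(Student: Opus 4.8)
The plan is to show that the tiles in $T(\theta)$, up to congruence, are exactly the polygons $s^{i-1}(p)$ for $i = 1, 2, \dots, M$, where $s$ is the scaling constant of the generating pair. Every tile of $T(\theta)$ has the form $t(\theta, k, \sigma) = (f_{-(\theta|k)} \circ f_{\sigma})(p)$. Since $f_{-(\theta|k)}$ is a similitude with scaling ratio $s^{-e(\theta|k)}$ and $f_\sigma$ is a similitude with scaling ratio $s^{e(\sigma)}$, the tile $t(\theta,k,\sigma)$ is similar to $p$ with scaling ratio $s^{e(\sigma) - e(\theta|k)}$. So the congruence class of a tile is determined by the integer $e(\sigma) - e(\theta|k)$, and I must pin down exactly which values this takes.

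First I would record that all tiles are similar to $p$ (immediate from the above) and that the exponent $e(\sigma) - e(\theta|k)$ is a nonnegative integer: by the defining condition of $T(\theta,k)$ we have $e(\sigma) \geq e(\theta|k)$. Next I would obtain the upper bound. Writing $\sigma = \sigma' \sigma_{|\sigma|}$ (i.e. removing the last symbol), the condition $e(\theta|k) > e^-(\sigma) = e(\sigma) - a_{\sigma_{|\sigma|}}$ gives $e(\sigma) - e(\theta|k) < a_{\sigma_{|\sigma|}} \leq M$, hence $e(\sigma) - e(\theta|k) \leq M - 1$. Thus every tile of $T(\theta)$ is congruent to $s^{j}(p)$ for some $j \in \{0, 1, \dots, M-1\}$, giving at most $M$ prototiles.

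It remains to show each of the $M$ values $j = 0, 1, \dots, M-1$ is actually attained, so the prototile set has exactly $M$ elements. For a fixed $k$, consider $\sigma \in S_{e(\theta|k)}$ (in the notation of Proposition~\ref{prop:union}); by that proposition the sets $f_\sigma(p)$ for such $\sigma$ tile $p$, so in particular $S_{e(\theta|k)}$ is nonempty and $T(\theta,k)$ is a genuine patch. To realize a given exponent $j$, I would start from any $\sigma$ with $e(\sigma) - e(\theta|k) < j$ and extend it (appending symbols and using that $S_n$ for larger $n$ refines $S_m$, as in the proof of Proposition~\ref{prop:union}) until the exponent first reaches a value that is at least $e(\theta|k) + j$; a short argument using $\gcd(a_1, \dots, a_N) = 1$ — or, more simply, the fact that appending a symbol with $a_{\sigma_i} = 1$ changes $e(\sigma)$ by exactly $1$ — shows one can hit $e(\sigma) = e(\theta|k) + j$ exactly while keeping $e^-(\sigma) < e(\theta|k)$, provided $j \leq M-1$; here one uses that $M \geq 1$ and that some $a_i$ equals... actually one only needs that the index $i_0$ with $a_{i_0} = M$ exists, and that after forming a long enough string one can append that symbol to jump from exponent $e(\theta|k)$ to $e(\theta|k)+M - 1 < e(\theta|k)+M$ in one step while the pre-last partial sum stays below $e(\theta|k)$.

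The main obstacle I anticipate is the last point: producing, for each target exponent $j \in \{1, \dots, M-1\}$ and each $\theta$, an explicit admissible $\sigma$ — that is, one satisfying $e(\sigma) \ge e(\theta|k) > e^-(\sigma)$ with $e(\sigma) - e(\theta|k) = j$. One must be careful that the final symbol $\sigma_{|\sigma|}$ can be chosen with $a_{\sigma_{|\sigma|}}$ large enough (at least $j+1$) while the truncation $\sigma'$ satisfies $e(\sigma') = e(\theta|k) - 1$ if such a string exists, or more carefully $e^-(\sigma) < e(\theta|k) \le e^-(\sigma) + a_{\sigma_{|\sigma|}}$; since there is a symbol with value $M \ge j+1$, and since the partial sums of extensions of the empty string hit every sufficiently large integer (again by $\gcd = 1$, or trivially if some $a_i = 1$), this can be arranged. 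Packaging this cleanly — perhaps by first handling $j = M-1$ via the symbol of maximal weight and then noting the intermediate values $j$ occur automatically as one scans outward through a single infinite string — is the crux, and I would spend most of the writeup there.
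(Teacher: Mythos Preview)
Your approach is essentially the paper's: compute the scaling ratio of each tile as $s^{e(\sigma)-e(\theta|k)}$, use the defining inequalities to pin the exponent in $\{0,1,\dots,M-1\}$, and then realize every such exponent by invoking $\gcd(a_1,\dots,a_N)=1$ (so every sufficiently large integer is a value of $e(\sigma')$) together with the freedom to take the final symbol of $\sigma$ to have weight $M$, for $k$ large. Your aside about a symbol with $a_i=1$ is a false trail (no such symbol need exist, e.g.\ $(a_1,a_2)=(2,3)$), and the ``scan outward through one string'' packaging does not work either, but your final paragraph already contains the correct argument --- just make explicit that you pass to $k$ large enough that $e(\theta|k)+j-M$ is representable.
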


\begin{proof}
Since the inverse of a similarity and any composition of similarities is a
similarity, each tile $t(\theta,k,\sigma)$ is similar to $p$. Therefore, all
tiles in $T(\theta)$ are similar to $p$.

To show that there are at most finitely many tiles up to congruence,
consider the tiles in $T(\theta,k)$ for any $k\geq 1$. The scaling ratio of a tile in 
$T(\theta,k)$ is of the form
\begin{equation*}
r(f_{-(\theta \, | \, k)} \circ f_{\sigma}) = s^a, \qquad \qquad \text{where}
\qquad \qquad a = e(\sigma) - e(\theta|k).
\end{equation*}
Let $\sigma = \sigma_1\, \sigma_2 \, \cdots \sigma_K$.
The restriction $e(\sigma) \geq e(\theta|k) > e^-(\sigma) = e(\sigma) - a_{\sigma_K}$
immediately implies that
\begin{equation*}
a_{\sigma_K} > a \geq 0.
\end{equation*}
Therefore $a \in \{0,1,2, \dots, M-1\}$, verifying that there are at most $M$ tiles up to congruence. 

To show that $a$ can take any value in $\{0,1,2, \dots, M-1\}$, recall that we can assume that $\gcd(a_{1},\dots,a_{N})=1$ and,  by an elementary result from number theory, every sufficiently large positive
number is a sum of terms $a_{1} ,a_{2},\dots,a_{N}$.  Therefore, if $k$ is sufficiently large, then
$ a = e(\sigma) - e(\theta|k)$ can take any positive integer value subject to the restriction $e(\sigma) \geq e(\theta|k) > e^-(\sigma)$ or equivalently,  subject to the restriction $0\leq a < a_{\sigma_K}$.  But for $k$ sufficiently large,  $\sigma_K \in [N]$ can be chosen arbitrarily, so
that $a_{\sigma_K}$ can be chosen to be $M$.
\end{proof}

\begin{figure}[htb]
\centering
\vskip -4mm
\includegraphics[width=12cm, keepaspectratio]{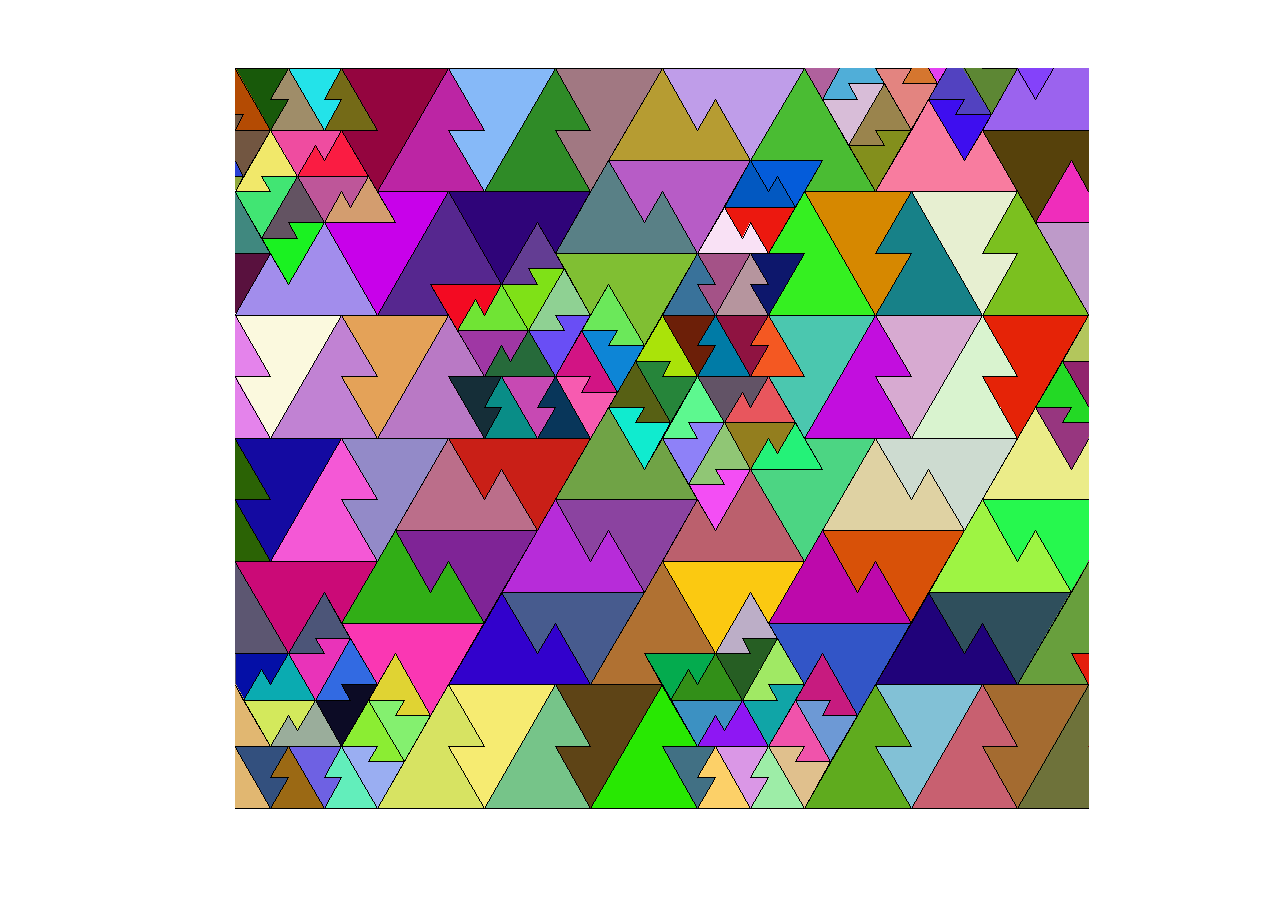}
\vskip -4mm\caption{Another sporadic tiling.}
\label{fig:10}
\end{figure}

\begin{theorem}
\label{thm:ss} If $(p,F)$ is a generating pair and $\theta$ is periodic of
the form $\theta = \overline \alpha$, then $T(\theta)$ is self-similar with
self-similarity map $\phi = f_{-\alpha}$.  
\end{theorem}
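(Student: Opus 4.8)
The plan is to check the two clauses in the definition of a self-similar tiling directly for the candidate map $\phi=f_{-\alpha}$. The scaling-ratio clause is immediate: writing $\ell=|\alpha|$, the map $f_{-\alpha}=f_{\alpha_1}^{-1}\circ\cdots\circ f_{\alpha_\ell}^{-1}$ is a composition of similitudes, hence a similitude, and its scaling ratio is $(r_{\alpha_1}\cdots r_{\alpha_\ell})^{-1}=s^{-e(\alpha)}>1$ because $0<s<1$. So the real content is to show that $\phi(t)$ is a disjoint union of tiles of $T(\theta)$ for every tile $t\in T(\theta)$.

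The engine is a periodicity identity for the truncations. Since $\theta=\overline\alpha$, one checks that $\theta|(k+\ell)=\alpha\,(\theta|k)$ for every $k\ge1$; since $e(\cdot)$ adds and $f_{-(\cdot)}$ composes over concatenation, this gives $f_{-(\theta|(k+\ell))}=\phi\circ f_{-(\theta|k)}$ and $e(\theta|(k+\ell))=e(\alpha)+e(\theta|k)$. I would then fix $t\in T(\theta)$, pick $k$ with $t\in T(\theta,k)$ so that $t=f_{-(\theta|k)}\circ f_\sigma(p)$ with $e(\sigma)\ge e(\theta|k)>e^-(\sigma)$, and set $k'=k+\ell$ and $n'=e(\theta|k')=e(\theta|k)+e(\alpha)$. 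The identity then gives $\phi(t)=f_{-(\theta|k')}\circ f_\sigma(p)$, and since $e^-(\sigma)<e(\theta|k)\le n'$ the inequality $n'>e^-(\sigma)$ holds automatically; so the only thing to track is how $e(\sigma)$ compares with $n'$.

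This suggests a two-case analysis. If $e(\sigma)\ge n'$, then $e(\sigma)\ge n'>e^-(\sigma)$ means $\sigma\in S_{n'}$, so $\phi(t)=t(\theta,k',\sigma)$ is literally a single tile of $T(\theta,k')\subseteq T(\theta)$. If $e(\sigma)<n'$, put $m=n'-e(\sigma)\ge1$; applying Proposition~\ref{prop:union} at index $m$ and pushing it forward by $f_\sigma$ gives $f_\sigma(p)=\bigsqcup\{f_{\sigma\tau}(p):\tau\in S_m\}$, hence
\[
\phi(t)=\bigsqcup\{f_{-(\theta|k')}\circ f_{\sigma\tau}(p):\tau\in S_m\}=\bigsqcup\{t(\theta,k',\sigma\tau):\tau\in S_m\},
\]
and the bookkeeping $e^-(\sigma\tau)=e(\sigma)+e^-(\tau)<e(\sigma)+m=n'\le e(\sigma)+e(\tau)=e(\sigma\tau)$ shows $\sigma\tau\in S_{n'}$, so each $t(\theta,k',\sigma\tau)$ is a tile of $T(\theta,k')\subseteq T(\theta)$. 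In both cases $\phi(t)$ is a disjoint union of tiles of $T(\theta)$, the disjointness being inherited from Proposition~\ref{prop:union} because $f_{-(\theta|k')}$ is a bijection. Combined with the first paragraph, this verifies that $\phi=f_{-\alpha}$ is a self-similarity of $T(\theta)$.

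I do not expect a genuine obstacle. The two things that require a moment's thought are the periodicity identity $\theta|(k+\ell)=\alpha\,(\theta|k)$ and the realization that the constraint $e^-(\sigma)<e(\theta|k)$ built into membership in $T(\theta,k)$ is precisely what prevents $\phi(t)$ from "overshooting": when $e(\sigma)$ is already large enough, $\phi(t)$ is a single tile, and otherwise it is subdivided by Proposition~\ref{prop:union}. Everything else is weight bookkeeping with $e$ and $e^-$. (Whether $T(\theta)$ covers all of ${\mathbb R}^2$, as opposed to the nested region $\bigcup_k f_{-(\theta|k)}(p)$ that it visibly tiles, is a separate question, handled for suitable $\alpha$ as in the proof of Proposition~\ref{prop:full}.)
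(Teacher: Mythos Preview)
Your proof is correct and takes a somewhat different route from the paper's. The paper argues via containment in the reverse direction: using $T(\alpha\theta)=T(\theta)$, it writes an arbitrary tile as $t=t(\alpha\theta,k,\sigma)$ with $k>|\alpha|$, sets $k'=k-|\alpha|$, and \emph{truncates} $\sigma$ to $\sigma_0=\sigma|m$ so that $\sigma_0\in S_{e(\theta|k')}$; then $t'=t(\theta,k',\sigma_0)$ satisfies $t\subset\phi(t')$. The sufficiency of this containment for self-similarity is left implicit (it works because $T(\theta)$ and $\phi(T(\theta))$ both tile the same nested region $\bigcup_k f_{-(\theta|k)}(p)$, so the containment forces $T(\theta)$ to refine $\phi(T(\theta))$). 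You instead verify the definition head-on: you push $t$ forward by $\phi$ to level $k'=k+|\alpha|$ and, when $e(\sigma)<e(\theta|k')$, invoke Proposition~\ref{prop:union} with $m=e(\theta|k')-e(\sigma)$ to exhibit the explicit subdivision of $\phi(t)$ into tiles of $T(\theta,k')$. Your approach makes the disjoint-union structure (and the scaling ratio $s^{-e(\alpha)}>1$) fully explicit, at the cost of a small case split; the paper's approach is terser and foregrounds the semigroup-action viewpoint that it reuses for Corollary~\ref{cor:ss}.
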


\begin{proof}
The set $[N]^*$ is a semigroup, where the operation is concatenation. Let $%
\mathbb{T}$ denote the set of all $\theta$-tilings for the pair $(p,F)$.
There is a natural semigroup action 
\begin{equation*}
{\widehat \alpha}  : \mathbb{T }\rightarrow \mathbb{T}
\end{equation*}
of $[N]^*$ on $\mathbb{T}$ defined by: 
\begin{equation*}
{\widehat \alpha}  ( T(\theta) ) = T( \alpha \, \theta) 
\end{equation*}
for $\alpha \in [N]^*$ and $T(\theta) \in \mathbb{T}$. We claim that, if $%
T(\theta) \in \mathbb{T}$ and if, in the above action, ${\widehat \alpha} $ fixes 
$T(\theta)$, then $\phi := f_{-\alpha}$ is a self-similarity of the tiling $%
T(\theta)$. To prove this, assume that $T := {T}(\theta) \in \mathbb{T}$ and
that ${\widehat \alpha} $ leaves $T$ fixed. It is sufficient to show that, for
every tile $t \in T$ there is a tile $t^{\prime }\in T$ such that $t \subset
\phi(t^{\prime })$. Since ${\widehat \alpha} $ fixes $T$, the tile $t$ can be
written as $t = t(\alpha\theta, k , \sigma)$, where $e(\sigma) \geq
e(\alpha\theta|k) > e^{-}(\sigma)$. Without loss of generality, assume that $k >|\alpha|$. Let 
$k^{\prime }= k-|\alpha|$ and define $m$ to be the least integer such that 
$e(\sigma|m) \geq e(\theta|k^{\prime}\geq e^{-}(\sigma|m).$ With $\sigma_0 = \sigma|m 
$, define $t^{\prime }= t\left (\theta, k^{\prime }, \sigma_0 \right)$. Note
that $f_{\sigma}(p) \subset f_{\sigma_0}(p)$. Now 
\begin{equation*}
\phi(t^{\prime }) = \phi \left ( t(\theta, k^{\prime }, \sigma_0) \right ) =
(f_{-(\alpha \theta)|k }\circ f_{\sigma_0})(p) \supset (f_{-(\alpha
\theta)|k} \circ f_{\sigma})(p) = t(\alpha\theta, k, \sigma) = t.
\end{equation*}
If $\theta = \overline \alpha$ is periodic, then clearly $T(\alpha \theta) = T(\theta)$,
verifying the theorem. 
\end{proof}

Call two tilings {\it congruent} if one can be obtained from the other by a Euclidean
motion, i.e., by a translation, rotation, reflection or glide.  

\begin{lemma} \label{lem:cong}   Let $(p,F)$ be a generating pair.  If $T(\theta)$ is 
self-similar and if $T(\theta')$ is congruent to $T(\theta)$, then 
then $T(\theta')$ is also self-similar.
\end{lemma}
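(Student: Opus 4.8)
The plan is to exhibit an explicit self-similarity of $T(\theta')$ conjugated from that of $T(\theta)$ by the Euclidean motion witnessing the congruence. Suppose $T(\theta')$ is congruent to $T(\theta)$ via a Euclidean motion $g$, so that $g(T(\theta)) = T(\theta')$ as tilings (i.e. $g$ carries each tile of $T(\theta)$ to a tile of $T(\theta')$, setwise). Let $\phi$ be a self-similarity of $T(\theta)$: a similitude with scaling ratio $r(\phi)>1$ such that for every $t\in T(\theta)$, $\phi(t)$ is the disjoint union of tiles in $T(\theta)$. The natural candidate for a self-similarity of $T(\theta')$ is the conjugate $\phi' := g\circ\phi\circ g^{-1}$.

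First I would check that $\phi'$ is a similitude with scaling ratio greater than $1$: a composition of similitudes is a similitude, and the scaling ratio of $g\circ\phi\circ g^{-1}$ is $1\cdot r(\phi)\cdot 1 = r(\phi)>1$, since a Euclidean motion has scaling ratio $1$. Next I would verify the defining property. Take any tile $t'\in T(\theta')$. Then $t := g^{-1}(t')$ is a tile of $T(\theta)$, so by self-similarity of $T(\theta)$ we have $\phi(t) = \bigsqcup_{i} t_i$ for some tiles $t_i\in T(\theta)$ (pairwise disjoint in the sense of intersecting only on boundaries). Applying $g$, which is a bijection of the plane carrying tiles of $T(\theta)$ to tiles of $T(\theta')$ and preserving disjointness (it is a homeomorphism, so it sends boundaries to boundaries and interiors to interiors), we get
\[
\phi'(t') = g\bigl(\phi(g^{-1}(t'))\bigr) = g\bigl(\phi(t)\bigr) = g\Bigl(\bigsqcup_i t_i\Bigr) = \bigsqcup_i g(t_i),
\]
and each $g(t_i)$ is a tile of $T(\theta')$. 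Hence $\phi'(t')$ is a disjoint union of tiles of $T(\theta')$, so $\phi'$ is a self-similarity of $T(\theta')$, and $T(\theta')$ is self-similar.

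The only genuinely delicate point is the bookkeeping about what ``congruent tilings'' means and the fact that a Euclidean motion preserves the disjointness condition (tiles meeting only along boundaries): this is immediate because any Euclidean motion is a homeomorphism of the plane, hence maps the interior of a set to the interior of its image and the boundary to the boundary. Everything else — that the conjugate of a similitude is a similitude of the same scaling ratio, and that conjugation transports the covering relation $\phi(t)=\bigsqcup t_i$ — is formal. I expect the write-up to be short; the main thing to state carefully is the setup asserting that congruence of tilings gives a Euclidean motion $g$ that is simultaneously a bijection $T(\theta)\to T(\theta')$ on tiles and a homeomorphism of $\mathbb{R}^2$.
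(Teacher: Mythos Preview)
Your proof is correct and follows exactly the same approach as the paper: conjugate the self-similarity of $T(\theta)$ by the Euclidean motion witnessing the congruence. The paper's version is terser (it simply asserts that $\phi\circ f\circ\phi^{-1}$ is a self-similarity and leaves the check to the reader), but your more detailed verification of the scaling ratio and the tile-by-tile covering condition is a faithful expansion of that same argument.
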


\begin{proof} Let $\phi$ be a Euclidean motion that takes  $T(\theta)$ to $T(\theta')$.
If $f$ is a similarity map for $T(\theta)$, then it is easy to check that 
$\phi \circ f \circ \phi^{-1}$ is a similarity map for  $T(\theta')$. 
\end{proof}

\begin{lemma} \label{lem:ep}
For any generating pair, if $\theta ,\psi \in \lbrack
N]^{\omega }$ share the same ``tail", namely $\theta _{K+1}\theta
_{K+2} \cdots = \psi _{L+1}\psi _{L+2} \cdots$ for a pair of positive integers $K,L
$ such that $e(\theta |K)=e(\psi |L),$ then $T(\theta )$ and $T(\psi )$
are congruent.
\end{lemma}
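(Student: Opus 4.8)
The plan is to produce an explicit Euclidean motion $g$ with $g(T(\theta)) = T(\psi)$, built by matching the patch $T(\theta,K+j)$ with $T(\psi,L+j)$ for every $j \geq 0$. Write $\omega := \theta_{K+1}\theta_{K+2}\cdots = \psi_{L+1}\psi_{L+2}\cdots$ for the common tail and, for $j \geq 0$, set $\omega|j := \theta_{K+1}\cdots\theta_{K+j}$ (the empty string when $j=0$). Then $\theta|(K+j) = (\theta|K)\,(\omega|j)$ and $\psi|(L+j) = (\psi|L)\,(\omega|j)$, so from the definition of $f_{-\sigma}$ one gets the factorizations $f_{-(\theta|(K+j))} = f_{-(\theta|K)}\circ f_{-(\omega|j)}$ and $f_{-(\psi|(L+j))} = f_{-(\psi|L)}\circ f_{-(\omega|j)}$, together with $e(\theta|(K+j)) = e(\theta|K) + e(\omega|j)$ and $e(\psi|(L+j)) = e(\psi|L) + e(\omega|j)$. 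Since $e(\theta|K) = e(\psi|L)$ by hypothesis, these last two quantities are equal for every $j$; call their common value $m_j$.

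First I would observe that, for fixed $j$, the patches $T(\theta,K+j)$ and $T(\psi,L+j)$ are indexed by exactly the same set of strings $\sigma$. Indeed, $t(\theta,K+j,\sigma) = f_{-(\theta|K)}\circ f_{-(\omega|j)}\circ f_\sigma(p)$ belongs to $T(\theta,K+j)$ iff $e(\sigma) \geq m_j > e^{-}(\sigma)$, and $t(\psi,L+j,\sigma) = f_{-(\psi|L)}\circ f_{-(\omega|j)}\circ f_\sigma(p)$ belongs to $T(\psi,L+j)$ under exactly this same condition. Now set $g := f_{-(\psi|L)}\circ (f_{-(\theta|K)})^{-1}$, a composition of similitudes and hence a similitude. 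For every admissible $\sigma$ one checks $g\bigl(t(\theta,K+j,\sigma)\bigr) = t(\psi,L+j,\sigma)$, so $g\bigl(T(\theta,K+j)\bigr) = T(\psi,L+j)$ for all $j \geq 0$.

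Next I would verify that $g$ is an isometry. The scaling ratio of $f_{-\tau}$ is $\prod_i r_{\tau_i}^{-1} = s^{-e(\tau)}$, hence $r(g) = s^{-e(\psi|L)}\cdot s^{\,e(\theta|K)} = s^{\,e(\theta|K)-e(\psi|L)} = 1$; a similitude of scaling ratio $1$ is a Euclidean motion. Finally, because the patches are nested and $\{K+j : j \geq 0\}$ is cofinal in the positive integers, $T(\theta) = \bigcup_{k\geq 1} T(\theta,k) = \bigcup_{j\geq 0} T(\theta,K+j)$, and likewise for $\psi$; applying $g$ term by term gives $g(T(\theta)) = T(\psi)$, so $T(\theta)$ and $T(\psi)$ are congruent.

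The underlying computations are all routine. The one point demanding care is the bookkeeping with the composition order in $f_{-\sigma}$ and, above all, the verification that the two patch index sets coincide; this is exactly where the hypothesis $e(\theta|K) = e(\psi|L)$ (as opposed to a bare tail condition or $K = L$) enters, and it is the heart of the argument.
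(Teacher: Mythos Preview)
Your proof is correct and follows essentially the same approach as the paper: both define the same Euclidean motion $g = f_{-(\psi|L)}\circ (f_{-(\theta|K)})^{-1}$, verify it has scaling ratio $1$ via the hypothesis $e(\theta|K)=e(\psi|L)$, and then use the factorization $f_{-(\theta|(K+j))}=f_{-(\theta|K)}\circ f_{-(\omega|j)}$ together with nestedness of the patches to conclude $g(T(\theta))=T(\psi)$. Your organization is slightly tidier (you establish the patch-level bijection $g(T(\theta,K+j))=T(\psi,L+j)$ explicitly, whereas the paper checks only one inclusion at the tile level), but the substance is identical.
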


\begin{proof}  
Let 
\[g = \left ( f_{\psi_{1}}^{-1} \circ f_{\psi _{2}}^{-1} \circ \cdots \circ f_{\psi _{L}}^{-1} \right ) \circ 
\left (f_{\theta_K} \circ f_{\theta_{K-1}} \circ \cdots \circ f_{\theta_1} \right ).\]
Because  $e(\theta |K)=e(\psi |L),$ the transformation $g$ is a Euclidean motion.
Given any tile $t := t(\theta, k, \sigma)$  in $T(\theta)$, it is sufficient to show that $g(t)$
is a tile in $T(\psi)$.  Because the sets $T(\theta,k)$ are nested there is no
loss of generality in assuming that $k>K$.   Let $n = L+k-K > L$ and note that $e(\theta|k) = e(\psi|n)$.
We now have
\[\begin{aligned}
g(t) &=  \left ( f_{\psi_{1}}^{-1} \circ f_{\psi _{2}}^{-1} \circ \cdots \circ f_{\psi _{L}}^{-1} \right ) \circ 
\left ( f_{\theta_K} \circ f_{\theta_{K-1}} \circ \cdots \circ f_{\theta_1} \right ) \circ 
f_{-(\theta|K)} \circ \\ & \qquad \qquad
\left ( f_{\theta_{K+1}}^{-1}f_{\theta _{K+2}}^{-1}...f_{\theta _{k}}^{-1} \right ) \circ
f_{\sigma}(p) \\
&= f_{-(\psi|L)} \circ \left ( f_{\psi_{L+1}}^{-1}f_{\psi _{L+2}}^{-1}...f_{\psi _{L+k-K}}^{-1} \right ) \circ f_{\sigma}(p)
= f_{-(\psi|n)}\circ f_{\sigma}(p) \in T(\psi).
\end{aligned} \]
\end{proof}

\begin{cor} \label{cor:ss}
 If $(p,F)$ is a generating pair and $\theta$ is eventually periodic of
the form $\theta = \beta \overline \alpha$ where $e(\beta) = e(\alpha)$,
then $T(\theta)$ is self-similar.
\end{cor}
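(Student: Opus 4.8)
The plan is to derive Corollary~\ref{cor:ss} by combining the three preceding results: Theorem~\ref{thm:ss}, Lemma~\ref{lem:cong}, and Lemma~\ref{lem:ep}. The key observation is that $T(\overline{\alpha})$ is already known to be self-similar, so it suffices to show that $T(\beta\overline{\alpha})$ is congruent to $T(\overline{\alpha})$; the self-similarity of $T(\beta\overline{\alpha})$ then follows immediately from Lemma~\ref{lem:cong}.

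First I would set $\theta = \beta\overline{\alpha}$ and $\psi = \overline{\alpha}$, and check that these two sequences satisfy the hypotheses of Lemma~\ref{lem:ep}. Writing $K = |\beta| + |\alpha|$, the tail of $\theta$ starting at position $K+1$ is $\overline{\alpha}$ (a further copy of $\alpha$ followed by the periodic continuation), and writing $L = |\alpha|$, the tail of $\psi$ starting at position $L+1$ is likewise $\overline{\alpha}$. So the two sequences share the same tail with the indicated offsets. It remains to verify the arithmetic condition $e(\theta|K) = e(\psi|L)$: here $e(\theta|K) = e(\beta) + e(\alpha)$ while $e(\psi|L) = e(\alpha)$, and since we are given $e(\beta) = e(\alpha)$, we get $e(\theta|K) = 2e(\alpha) = e(\alpha) + e(\alpha)$, which does not match $e(\psi|L) = e(\alpha)$ unless $e(\alpha) = 0$. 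So I would instead take $L = 2|\alpha|$, i.e.\ use $\psi|L = \alpha\alpha$, an initial segment of $\overline{\alpha}$ of length $2|\alpha|$; then $e(\psi|L) = 2e(\alpha) = e(\beta) + e(\alpha) = e(\theta|K)$, and the tail of $\psi$ from position $L+1$ is still $\overline{\alpha}$, matching the tail of $\theta$ from position $K+1$. With $K, L$ chosen this way, Lemma~\ref{lem:ep} applies and yields that $T(\theta)$ and $T(\psi) = T(\overline{\alpha})$ are congruent.

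Next I would invoke Theorem~\ref{thm:ss} with the periodic string $\overline{\alpha}$ to conclude that $T(\overline{\alpha})$ is self-similar (with self-similarity map $f_{-\alpha}$). Finally, since $T(\beta\overline{\alpha})$ is congruent to the self-similar tiling $T(\overline{\alpha})$, Lemma~\ref{lem:cong} gives that $T(\beta\overline{\alpha})$ is self-similar, completing the proof.

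The main thing to get right is the bookkeeping with the offsets $K$ and $L$ in the application of Lemma~\ref{lem:ep} — one must pad $\psi = \overline{\alpha}$ out to a prefix of length $2|\alpha|$ rather than $|\alpha|$ so that the energy condition $e(\theta|K) = e(\psi|L)$ balances against $e(\beta) = e(\alpha)$. Beyond that, each step is a direct citation of an already-proved result, so there is no substantive obstacle; the corollary is essentially a packaging of Lemmas~\ref{lem:cong} and \ref{lem:ep} on top of Theorem~\ref{thm:ss}.
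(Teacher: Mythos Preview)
Your proof is correct and follows the same route as the paper, combining Theorem~\ref{thm:ss}, Lemma~\ref{lem:ep}, and Lemma~\ref{lem:cong}. Note that the simpler choice $K = |\beta|$, $L = |\alpha|$ already satisfies the hypotheses of Lemma~\ref{lem:ep} directly (both tails equal $\overline{\alpha}$ and $e(\theta|K) = e(\beta) = e(\alpha) = e(\psi|L)$), avoiding your detour through $K = |\beta| + |\alpha|$ and $L = 2|\alpha|$.
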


\begin{proof} 
 By Lemma~\ref{lem:ep}, the tilings $T(\overline \alpha)$ and
$T(\beta \overline \alpha)$ are congruent.  By Theorem~\ref{thm:ss} the tiling $T(\theta)$ is self-similar.
The result now follows from Lemma~\ref{lem:cong}.
\end{proof}

\begin{theorem}
\label{thm:quasi} 
If $(p,F)$ is a generating pair and $T(\theta)$ tiles 
${\mathbb{R}}^2$, then $T(\theta)$ is quasiperiodic.
\end{theorem}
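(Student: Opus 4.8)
The plan is to show that every patch $U$ of $T(\theta)$ reappears within bounded distance everywhere in the tiling, by exploiting the hierarchical structure coming from the maps $f_{-(\theta|k)}$. First I would fix a patch $U \subset T(\theta)$. Since the patches $T(\theta,k)$ are nested and exhaust $T(\theta)$, and $U$ is finite, there is some $k_0$ with $U \subset T(\theta,k_0)$; moreover, by enlarging $k_0$ if necessary, I can assume that the union $\bigsqcup\{t : t \in U\}$ is contained in the interior of the ``big tile'' $f_{-(\theta|k_0)}(p)$ (here I use Proposition~\ref{prop:union}, exactly as in the proof of Proposition~\ref{prop:full}: for $k$ large, $f_{\sigma}(p) \subset p^{o}$ for most $\sigma$, so a finite patch sits inside a large inflated tile, away from its boundary). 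Thus it suffices to prove that copies of the single big tile $P_{k_0} := f_{-(\theta|k_0)}(p)$, together with the induced tiling of $P_{k_0}$ by elements of $T(\theta)$, recur within bounded distance throughout the plane.

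Next I would describe the combinatorial structure of how a copy of $P_{k}$ sits inside $P_{k+1}$. Since $f_{-(\theta|k+1)} = f_{-(\theta|k)}\circ f_{\theta_{k+1}}^{-1}$, the big tile $P_{k}$ is obtained from $P_{k+1}$ by applying $f_{\theta_{k+1}}^{-1}$'s inverse relationship --- more precisely, $P_{k+1} = f_{-(\theta|k)}\circ f_{\theta_{k+1}}^{-1}(p)$ contains $P_{k} = f_{-(\theta|k)}(p) = f_{-(\theta|k+1)}\circ f_{\theta_{k+1}}(p)$, so $P_k$ appears as one of the finitely many subtiles $f_{-(\theta|k+1)}(f_{\theta_{k+1}}(p))$ of the self-similar subdivision of $P_{k+1}$. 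Iterating, inside any $P_m$ with $m$ large, there is a copy of the sub-configuration of $P_{k_0}$; and because $T(\theta)$ tiles $\mathbb{R}^2$, we have $\bigcup_m P_m = \mathbb{R}^2$, so every point of the plane eventually lies in the interior of some $P_m$. The key quantitative point is that up to congruence there are only \emph{finitely many} ``shapes'' of the labelled configuration (a big tile of some level together with its subdivision down to the level of $U$): indeed a level-$j$ big tile, as a tiled region, depends only on the finite string $\theta_{j+1}\theta_{j+2}\cdots\theta_{j+c}$ for a bounded lookahead $c$ (bounded because $M = \max a_i$ bounds how the strings $\sigma$ in the definition of $T(\theta,k)$ can stretch), and there are at most $N^{c}$ such strings. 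Hence there are only finitely many congruence types of (sufficiently inflated) big tiles carrying a copy of $U$.

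From here the argument is a standard covering/compactness step. I would pick a level $m_0$ large enough that every congruence type of level-$m_0$ big tile already contains (in its subdivision) a congruent copy of $U$; such $m_0$ exists because, as just noted, there are finitely many types and each type, being a large inflated tile, eventually has $P_{k_0}$ (hence $U$) among its descendants for $m_0$ large. Now $T(\theta)$ tiles $\mathbb{R}^2$, and the level-$m_0$ big tiles $\{f_{-(\theta|m_0)}\circ f_{\gamma}(p)\}$ — that is, the patch $T(\theta, m_0)$ reinflated one step, or equivalently the tiling $\phi$-image structure — cover the plane with pieces of uniformly bounded diameter $\mathrm{diam}(p)\cdot s^{-m_0 + \text{const}}$. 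Every disk of radius $R := 2\,\mathrm{diam}(P_{m_0})$ therefore fully contains at least one level-$m_0$ big tile, which in turn contains a copy of $U$. That gives the quasiperiodicity constant $R$ for the patch $U$, completing the proof.

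The main obstacle I anticipate is making rigorous the claim that the tiled region $P_j$ (a big tile together with the induced finite subdivision into tiles of $T(\theta)$, read modulo congruence) depends only on a bounded-length window of $\theta$ starting at position $j$, and that consequently only finitely many congruence types occur. This requires carefully unwinding Definition~\ref{def:scaled}: one must check that which tiles $t(\theta,k,\sigma)$ fall inside $P_j = f_{-(\theta|j)}(p)$, and their relative placement, is governed by $f_{\theta_{j+1}},\dots,f_{\theta_{j+c}}$ alone once $c$ exceeds the (bounded) depth needed for $e(\sigma)$ to exceed $e(\theta|j)+M$. The bound $M = \max a_i$ from Proposition~\ref{prop:tiling} is what keeps this window finite; handling the tiles that straddle the boundary of $P_j$ (which is why I arranged for $U$ to lie strictly inside $P_{k_0}$) is the delicate bookkeeping. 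Everything else — the nesting, the contraction filling the plane, and the final covering argument — is routine given the earlier propositions.
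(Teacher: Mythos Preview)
Your strategy is in the right spirit --- exploit the hierarchy $P_k = f_{-(\theta|k)}(p)$ to find copies of the patch everywhere --- and it parallels the paper's, but the final covering step has a genuine gap. You assert that ``level-$m_0$ big tiles $\{f_{-(\theta|m_0)}\circ f_{\gamma}(p)\}$ \dots\ cover the plane with pieces of uniformly bounded diameter,'' but for fixed $m_0$ those tiles are exactly the elements of $T(\theta,m_0)$, and their union is the single polygon $P_{m_0}$, not the plane. There is no natural tiling of $\mathbb{R}^2$ by ``level-$m_0$ supertiles'' here, because $T(\theta)$ is not assumed self-similar (that holds only for special $\theta$); the sets $P_j$ form a single nested increasing chain, not a covering by many congruent copies. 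Your ``finitely many congruence types'' claim is likewise off: the tiled region $(P_j,T(\theta,j))$ is congruent to $s^{-e(\theta|j)}$ times $(p,Q_{e(\theta|j)})$, and as $j$ grows these are all different sizes and different tilings; there is no finite list of types to invoke.

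The paper closes this gap by letting the outer level depend on the disk. Given $D$ of radius $R$, it first chooses $K$ with $D \subset P_K$ (possible since $T(\theta)$ tiles $\mathbb{R}^2$), and then tiles $P_K$ by the $f_{-(\theta|K)}$-images of the canonical tiling $Q_{C-c-b}$ of $p$, where $C = e(\theta|K)$, $c = e(\theta|k)$, and $b = a+M$. These intermediate pieces have scaling ratio between $s^{-(c+b)}$ and $s^{-(c+b)+M-1}$, \emph{independent of $K$}, so their diameters are uniformly bounded and one of them fits inside $D$. The constant $a$ comes from the hypothesis $\gcd(a_1,\dots,a_N)=1$: it guarantees that inside every such intermediate piece $q=f_\zeta(p)$ one can extend $\zeta$ to some $\sigma$ with $e(\sigma)=C-c$ exactly, so that $H(\sigma)=\{f_\sigma f_\gamma(p):\gamma\in S_c\}\subset q$, and $f_{-(\theta|K)}(H(\sigma))$ is then congruent to $T(\theta,k)\supset U$. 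Your sketch never invokes the $\gcd$ condition, and without it you cannot hit the exact level needed to place a congruent copy of $T(\theta,k)$ inside the intermediate piece.
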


\begin{proof}
Because it is assumed that $\gcd (a_1, a_2, \dots, a_N) = 1$, there is an integer $a$ such
that, for all $n\geq a$, there is a $\sigma \in [N]^*$ such that $e(\sigma)
= n$. It follows from this, for any $n\geq a$ and $m$ sufficiently large,
that 
\begin{equation*}
Q_m := \{ f_{\sigma}(p) : \sigma \in S_m\},
\end{equation*}
is a refinement of $Q_{m-n}$, refinement meaning that each tile in $Q_m$ is
contained in a tile of $Q_{m-n}$. By Proposition~\ref{prop:union} each $Q_m$ is a
tiling of $p$. Recall that $M = \max \{a_i \, : \, i \in [N] \}$ is the order of the tiling $T(\theta)$, and 
let $b = a+M$.

Let $U$ be a patch in the tiling $T(\theta)$. Since $T(\theta)$ covers ${%
\mathbb{R}}^2$, there is an integer $k$ such that $U$ is a subset of the
union of the tiles in $T(\theta,k)$. Let $c = e(\theta|k)$, and let $R$ be
sufficiently large that a disk of radius $R/2$ contains the polygon $%
s^{b+c}(p)$. If $D$ be any disk of radius $R$, it now suffices to show that $%
D$ contains a set of tiles congruent to $T(\theta|k)$.

Since $T(\theta)$ covers ${\mathbb{R}}^2$, there is a $K$ such that $D
\subset f_{(-\theta|K)}(p)$. Let $C = e(\theta|K)$. For $\sigma \in [N]^*$,
let 
\begin{equation*}
H(\sigma) = \{f_{\sigma} \circ f_{\gamma} (p) : \gamma \in S_c\},
\end{equation*}
and note that, if $e(\sigma) = C-c$, then $H(\sigma) \subset Q_C$.

We next show that, for any $q \in Q_{C-c-b}$, there is a $\sigma$ such that
all tiles in $H(\sigma)$ are contained in $q$. The tile $q = f_{\zeta} (p)$,
for some $\zeta \in S_{C-c-b}$. Note that, by the definition of $Q_{C-c-b}$,
we have $e(\zeta) < C-c-b +M-1$, which implies that $(C-c) - e(\zeta) > b
-M+1 > a$. Therefore there exists a $\sigma^{\prime *}$ such that $\sigma =
\zeta \sigma^{\prime }$ and $e(\sigma) = C-c$. Let $t \in H(\sigma)$. There
is a $\gamma \in S_c$ such that 
\begin{equation*}
t = f_{\sigma} \circ f_{\gamma} (p) = f_{\zeta} \circ f_{\sigma ^{\prime }}
\circ f_{\gamma} (p) \subset f_{\zeta}(p) = q.
\end{equation*}

Let $t = f_{(-\theta|K)}(q)$, where $q \in Q_{C-c-b}$. Then $t=
f_{(-\theta|K)} \circ f_{\zeta}(p)$, where $e(\zeta)\geq C-c-b$. Since $%
e(-\theta|K) + e(\zeta) \geq -c-b$, by the definition of $R$, the tile $t$
is contained in a disk of radius $R/2$. This implies that at least one such
tile $t_0 \in f_{(-\theta|K)} (q_0)$  contained in $D$ because 
\begin{equation*}
D \subset f_{(-\theta|K)}(p) = f_{(-\theta|K)} \left ( \bigcup \{ t : t \in
Q_{C-c-b})\} \right ) = \bigcup \{t: t\in f_{(-\theta|K)} (Q_{C-c-b})\}.
\end{equation*}

We have shown that there is a $\sigma$ with $e(\sigma) = C-c$ such that all
the tiles in $H(\sigma)$ are contained in $q_0$. Therefore, all the tiles in $%
f_{(-\theta|K)} (H(\sigma) )$ are contained in $t_0$, hence also in $D$.
Because $f_{(-\theta|K)} (H(\sigma) )$ is congruent to $T(\theta,k)$, the
proof is complete.
\end{proof}

\section{Open problems.} \label{sec:remarks}

Some basic questions remain open.

\begin{question}
Can every self-similar polygonal tiling be obtained by the generating pair
method of Definition~\ref{def:scaled}?
\end{question}

\begin{question}
In Section~\ref{sec:examples}, Examples 2 and 3, two infinite families of
irreducible generating pairs are given. Are there additional infinite families
of irreducible generating pairs?
\end{question}

\begin{question}
Several sporadic irreducible generating pairs are given in
Section~\ref{sec:examples}. Are there at most finitely many sporadic
irreducible generating pairs? If not, given $N$, are there at most finitely
many irreducible generating pairs $(p,F)$ for which $|F| = N$?
\end{question}

\begin{question}
The pinwheel tilings of C. Radin \cite{Rad} are based on the subdivision of a
right triangle with side lengths $1,2, \sqrt{5}$ due to J. Conway; see
Figure~\ref{fig:pin}. These tilings are order $1$ tilings in the terminology
of this paper. Do there exist higher order analogs? In other words, does there
exist an irreducible (in the sense of Example~\ref{ex:reducible}) self-similar
polygonal tiling of order at least $2$ for which the tiles appear in
infinitely many rotational orientations?
\end{question}

\begin{figure}[htb]
\centering
\vskip -6mm
\includegraphics[width=6cm, keepaspectratio]{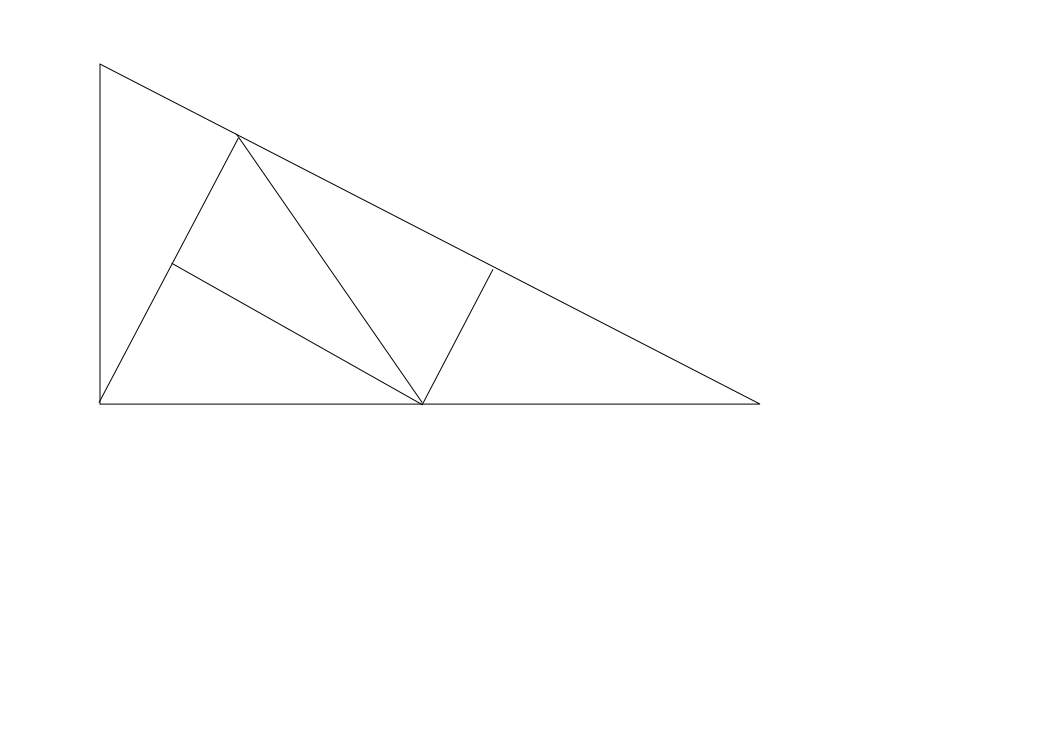}
\vskip -1.6cm
\caption{Pinwheel rep-tile.}
\label{fig:pin}
\end{figure}

Given a generating pair $(p,F)$, let $\mathbb{T}(p,F)$ denote the set of its
$\theta$-tilings of the plane. Let $\pi: [N]^{\omega} \rightarrow
\mathbb{T}(p,F)$ denote the map defined by $\theta\mapsto T(\theta)$. As
stated in Section~\ref{sec:intro}, there are infinitely many self-similar
golden bee tilings up to congruence, none of which is periodic. On the other
hand, the image of $[N]^{\omega}$ under $\pi$ need not always be infinite.
There may be many strings $\theta$ for which their images $T(\theta)$ are
pairwise congruent tilings. This is the case, for example, when $p$ is a
square whose images under four functions in $F$ subdivide $p$ into four
smaller squares. In this case, $T(\theta)$ is, for all $\theta$, the standard
tiling of the plane by squares. Moreover, all such square $\theta$-tilings are periodic.

\begin{question}
Let $(p,F)$ be an irreducible generating pair, where $p$ is not a triangle or
a parallelogram. Is it the case that there exist infinitely many $\theta
$-tilings up to congruence, none of which is periodic?
\end{question}

{\bf Acknowledgment.}
We thank Louisa Barnsley for assistance with some of the figures.
This work was partially supported by a grant from the Simons Foundation (\#322515 to Andrew Vince). 
It was also partially supported by a grant from the Australian Research Council to Michael Barnsley (DP130101738).

\end{document}